\RequirePackage{fix-cm}
\documentclass[smallextended]{svjour3}       
\smartqed  
\usepackage{csquotes}
\usepackage{graphicx}
\usepackage{amsfonts}
\usepackage{amsmath}
\usepackage{amssymb}
\usepackage{geometry}
\usepackage{color}
\usepackage[hidelinks]{hyperref}
\usepackage{cite}
\newcommand{\de}{\mathrm{d}}
%
%
%

\begin{document}

\title{Polynomial histopolation on mock-Chebyshev segments
}

\author{Ludovico Bruni Bruno         \and
        Francesco Dell'Accio \and 
        Wolfgang Erb \and 
        Federico Nudo
}


\institute{Ludovico Bruni Bruno \at
              Department of Mathematics \enquote{Tullio Levi-Civita}, University of Padova, Italy \\
              Istituto Nazionale di Alta Matematica, Roma, Italy
              \email{ludovico.brunibruno@unipd.it}         
           \and
            Francesco Dell'Accio \at
             Department of Mathematics and Computer Science, University of Calabria, Rende (CS), Italy\\ 
             Istituto per le Applicazioni del Calcolo 'Mauro Picone', Naples Branch, C.N.R. National Research Council of Italy, Napoli, Italy
             \email{francesco.dellaccio@unical.it} 
         \and 
             Wolfgang Erb \at
              Department of Mathematics \enquote{Tullio Levi-Civita}, University of Padova, Italy \\
              \email{wolfgang.erb@unipd.it}
         \and 
            Federico Nudo \at
              Department of Mathematics \enquote{Tullio Levi-Civita}, University of Padova, Italy \\
              \email{federico.nudo@unipd.it}
}

\date{Version: July 23, 2024}

\maketitle

\begin{abstract}
In computational practice, we often encounter situations where only measurements at equally spaced points are available. Using standard polynomial interpolation in such cases can lead to highly inaccurate results due to numerical ill-conditioning of the problem. Several techniques have been developed to mitigate this issue, such as the mock-Chebyshev subset interpolation and the constrained mock-Chebyshev least-squares approximation.
The high accuracy and the numerical stability achieved by these techniques motivate us to extend these methods to histopolation, a polynomial interpolation method based on segmental function averages. 

While classical polynomial interpolation relies on function evaluations at specific nodes, histopolation leverages averages of the function over subintervals. In this work, we introduce three types of mock-Chebyshev approaches for segmental interpolation and theoretically analyse the stability of their Lebesgue constants, which measure the numerical conditioning of the histopolation problem under small perturbations of the segments. We demonstrate that these segmental mock-Chebyshev approaches yield a quasi-optimal logarithmic growth of the Lebesgue constant in relevant scenarios. Additionally, we compare the performance of these new approximation techniques through various numerical experiments.
\keywords{Polynomial interpolation on segments \and histopolation \and mock-Chebyshev segments \and constrained mock-Chebyshev least-squares approximation \and stability of the Lebesgue constant under perturbations}
\subclass{33F05 \and 41A05 \and 41A10}
\end{abstract}

\section{Introduction}
\label{intro}
Univariate polynomial interpolation is a classical numerical method for approximating a function $f$ over a specified interval $[a,b]$ from a set of function samples. More precisely, assuming to know the evaluations of a function $f$ on a grid of $n$ points
\begin{equation*}
    X_n=\left\{x_1,\dots,x_n\right\}, \qquad a\le x_1 < \dots < x_{n} \le b, 
\end{equation*}
the idea is to approximate the function $f$ with the unique polynomial $p_{n-1}$ in the space $\mathbb{P}_{n-1}$ of polynomials of degree less or equal to $n-1$ satisfying the interpolation conditions
\begin{equation*}
    p_{n-1}(x_i)=f(x_i), \qquad i=1,\dots, n.
\end{equation*}
Assuming that the function $f$ is sufficiently smooth, meaning it belongs to the space $C^{n-1}([a, b]) $ of all continuously differentiable functions with continuous first $n-1$ derivatives on $[a, b]$, and that $ f^{(n)}(x) $ exists at each point $x\in(a, b)$, the placement of the nodes $X_n$ affects the accuracy of the interpolation via the well-known remainder formula
\begin{equation} \label{eq:remainder}
f(x) - p_{n-1}(x) = \frac{\prod\limits_{i=1}^n (x-x_i)}{n!} f^{(n)} (\xi) ,
\end{equation}
see e.g.~\cite[Thm. $ 3.1.1 $]{Davis:1975:IAA}. The placement of the point $ \xi $ depends on the set of nodes $ X_n $ and the function $ f $ itself. The error formula in Eq. \eqref{eq:remainder} evidences the relevance of a proper selection of the node set $X_n$. This gets particularly evident for the interpolation of functions with fast growing derivatives, as for instance the Runge function $1/(1-25 x^2)$ in \cite{Runge1901}.

\subsection{Histopolation}

The concept of polynomial interpolation can be generalized to more abstract settings in which the given information does not only consist of function evaluations but of general functionals~\cite{Rivlin}. For this, let $ \{ \mu_1,\dots,\mu_n \} $ be $n$ linear functionals acting on a function $f$ and denote by 
\begin{equation} \label{eq:gramian}
V = 
\begin{bmatrix}
\mu_{1}(u_1) & \mu_{1}(u_2) & \cdots & \mu_{1}(u_{n})\\
\mu_{2}(u_1) & \mu_{2}(u_2) & \cdots & \mu_{2}(u_{n})\\
\vdots  & \vdots  & \ddots & \vdots  \\
\mu_{n}(u_1) & \mu_{n}(u_2) & \cdots & \mu_{n}(u_{n})\\
\end{bmatrix}
\end{equation}
the Gramian of these functionals with respect to a  polynomial basis $ \{u_1, \ldots, u_{n} \} $ of the space $ \mathbb{P}_{n-1} $. In this more abstract view, if $ \det V \ne 0 $, and by knowing the data
\begin{equation*}
    \mu_i(f), \qquad i=1,\dots,n,
\end{equation*}
interpolation becomes the process of identifying an element $p_{n-1}\in \mathbb{P}_{n-1}$ based on the interpolation conditions
\begin{equation} \label{eq:histopolatingconditions}
\mu_i(p_{n-1})=\mu_i(f), \qquad i=1,\dots,n.
\end{equation}
If $ \mu_i $ is the classical evaluation functional on the $i$-th node $ x_i \in X_n $, $ V $ corresponds to the well-known Vandermonde matrix. If the functionals $ \mu_i $ correspond to integrals over segments $ s_i $, the respective interpolation is referred to as \emph{histopolation}~\cite{Robidoux}, and plays a relevant role in splines~\cite{Schoenberg}, preconditioning~\cite{HiptmairXu} and conservation of physical quantities~\cite{HiemstraJCP}. 
Formally, we can define the linear functionals in histopolation by the mean values
\begin{equation}\label{linfunhist}
\mu_i(f):= \frac{1}{|s_i|}\int_{s_i}f(x) \de x, \qquad i=1,\dots,n,
\end{equation}
where $s_i$, $i=1,\dots,n$, are subintervals in $[a, b]$. A set of segments $\mathcal{S} := \{s_1, \dots, s_n\}$, such that the matrix  $V$ defined in~\eqref{eq:gramian}, related to $\mathbb{P}_{n-1}$ and the linear functionals~\eqref{linfunhist}, is non-singular, is called \textit{unisolvent}.
Histopolation leverages information about the integral or the mean value of the function $f$ over segments within $[a, b]$. This allows to approximate larger families of functions, even functions with jumps and discontinuities.  The interpolation on segments only demands that the function is essentially bounded, a significantly less restrictive condition compared to the continuity needed for classical polynomial interpolation. Nevertheless, if $ f $ is sufficiently regular and the segments $\mathcal{S}$ are non-overlapping, the error formula \eqref{eq:remainder} can be extended also to histopolation problems.
\begin{proposition} \label{prop-1}
    Let $ f \in C^{n-1} ([a,b]) $ and assume that $ f^{(n)} (x) $ exists at each point $ x \in (a,b)$. Let $ \mathcal{S} := \{ s_1, \ldots, s_n \} $ be a collection of segments such that $ | s_i \cap s_j | = 0 $ if $ i \ne j $. If $p_{n-1}\in\mathbb{P}_{n-1}$ is the unique interpolating polynomial satisfying
    \begin{equation} \label{eq:explicithistopolation}
        \frac{1}{|s_i|}\int_{s_i} f (x) \de x = \frac{1}{|s_i|} \int_{s_i} p_{n-1} (x) \de x, \qquad i=1,\dots,n,
    \end{equation} 
    then there exist $ \Bar{\xi}, \xi_{1}, \ldots, \xi_n \in [a,b] $ such that
    \begin{equation} \label{eq:remainderhistpolation}
        f(x) - p_{n-1}(x) = \frac{\prod\limits_{i=1}^n (x-\xi_i)}{n!} f^{(n)} (\Bar{\xi}) .
    \end{equation}   
\end{proposition}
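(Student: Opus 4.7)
The plan is to reduce the histopolation error formula to the classical pointwise interpolation error formula by first producing $n$ nodes $\xi_1,\dots,\xi_n$ at which the error vanishes, and then applying the standard Rolle-type argument.

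First, set $e(x) := f(x) - p_{n-1}(x)$. Since $f \in C^{n-1}([a,b])$ and $p_{n-1}$ is a polynomial, $e$ is continuous on $[a,b]$ and the interpolation condition \eqref{eq:explicithistopolation} reads $\int_{s_i} e(x)\,\de x = 0$ for each $i=1,\dots,n$. By the mean value theorem for integrals applied to $e$ on the segment $s_i$, there exists a point $\xi_i \in s_i$ such that $e(\xi_i)=0$. Because the segments are pairwise essentially disjoint, the points $\xi_1,\dots,\xi_n$ are distinct.

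Next, fix $x \in [a,b]$. If $x = \xi_i$ for some $i$, equation \eqref{eq:remainderhistpolation} holds trivially (both sides are zero, for any choice of $\Bar\xi$). Otherwise, following the classical argument as in the proof of \eqref{eq:remainder}, define the auxiliary function
\begin{equation*}
g(t) := e(t) - \frac{\prod_{i=1}^{n}(t-\xi_i)}{\prod_{i=1}^{n}(x-\xi_i)}\, e(x),\qquad t \in [a,b].
\end{equation*}
By construction $g$ vanishes at the $n+1$ distinct points $x,\xi_1,\dots,\xi_n$, and it inherits from $f$ and the polynomial terms the regularity needed to apply Rolle's theorem iteratively: $g^{(k)}$ has at least $n+1-k$ zeros in $[a,b]$ for $k=0,1,\dots,n-1$, and $g^{(n)}$ has at least one zero $\Bar\xi \in (a,b)$, since $f^{(n)}$ exists on $(a,b)$.

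Finally, computing $g^{(n)}$ I use that $p_{n-1}^{(n)}\equiv 0$ and that $\prod_{i=1}^{n}(t-\xi_i)$ is a monic polynomial of degree $n$ whose $n$-th derivative equals $n!$. Evaluating at $\Bar\xi$ yields
\begin{equation*}
0 = g^{(n)}(\Bar\xi) = f^{(n)}(\Bar\xi) - \frac{n!}{\prod_{i=1}^{n}(x-\xi_i)}\, e(x),
\end{equation*}
and solving for $e(x) = f(x) - p_{n-1}(x)$ gives \eqref{eq:remainderhistpolation}. The only delicate step is the first one, namely the extraction of the $n$ distinct zeros $\xi_i$; this is where the hypothesis that the segments do not essentially overlap is essential, since otherwise two of the mean-value points could coincide and the Rolle argument would lose one of its roots.
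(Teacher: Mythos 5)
Your proposal is correct and follows essentially the same route as the paper's proof: extract the zeros $\xi_1,\dots,\xi_n$ via the mean value theorem for integrals, then run the standard auxiliary-function/generalized-Rolle argument (your $g(t)$ is exactly the paper's $W(\xi)$ with its constant $K(x)$ written out explicitly). Both arguments also treat the distinctness of the $\xi_i$ and the degenerate case $x\in\{\xi_1,\dots,\xi_n\}$ in the same way, so there is nothing further to add.
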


\begin{proof}
The conditions $ | s_i \cap s_j | = 0 $ for $ i \ne j $ guarantee the unisolvence of the segment set $ \mathcal{S} $ (see \cite[Prop. 3.1]{Bruno:2023:PIO}), and thus the existence of a unique interpolating polynomial $p_{n-1} \in \mathbb{P}_{n-1}$. To obtain the remainder formula \eqref{eq:remainderhistpolation}, we proceed as follows. We rewrite the set of equations \eqref{eq:explicithistopolation} as
\begin{equation*}
\frac{1}{|s_i|} \int_{s_i} \left(f(x)-p_{n-1}(x)\right) \de x = 0, \qquad i=1,\dots,n .
\end{equation*}
Then, by the mean value theorem, for each $ i = 1, \ldots, n,$ there exists $ \xi_i \in s_i $ such that 
\begin{equation*}
f(\xi_i) - p_{n-1}(\xi_i) = 0, \qquad i=1,\dots,n.    
\end{equation*}
Notice that, for a fixed $ x \notin\left\{\xi_1, \ldots,  \xi_n\right\}$, we can define the function
\begin{equation}\label{eq2prv}
    K(x) := \frac{f(x) - p_{n-1} (x)}{\prod\limits_{i=1}^n (x - \xi_i)}
\end{equation}
and consider the following function of $\xi$
\begin{equation} \label{eq:usefulprop1}
W(\xi) := f(\xi) - p_{n-1} (\xi) - \prod_{i=1}^n (\xi - \xi_i) K(x) .
\end{equation}
By construction, $ W(\xi) $ vanishes at the $n+1$ points $ x, \xi_1, \ldots,  \xi_n $, which are distinct since $ |s_i \cap s_j | = 0 $, that is, segments in $ \mathcal{S} $ intersect at most in their vertices. By the generalized Rolle's Theorem~\cite[Thm. $ 1.6.3 $]{Davis:1975:IAA}, it follows that $ W^{(n)} (\Bar{\xi}) = 0 $ for some $ \Bar{\xi} $ such that $ \min(\xi,\xi_1) < \Bar{\xi} < \max(\xi,\xi_n) $. Thus, differentiating Eq. \eqref{eq:usefulprop1} $n$ times with respect to $ \xi $, and evaluating the expression at $ \Bar{\xi} $, we obtain
\begin{equation}\label{eq1prv}
K(x)=\frac{f^{(n)} (\Bar{\xi})}{n!}. 
\end{equation}
By combining~\eqref{eq2prv} and~\eqref{eq1prv}, we get the statement of the theorem. If $ x \in\left\{\xi_1, \ldots,  \xi_n\right\}$, then Eq.~\eqref{eq:remainderhistpolation} is trivially satisfied. \qed
\end{proof}

The larger amount of free parameters in histopolation makes the identification of suitable segments for the interpolation and approximation of functions $f$ a slightly more challenging task than in classical nodal interpolation. To reduce the number of free parameters, three distinct relevant cases were identified and studied in detail in~\cite{Bruno:2023:PIO}:
\begin{itemize}
    \item [(C1)] \textit{Chains of intervals}: for $n+1$ nodes $a = x_0 < x_1 < \cdots < x_{n-1} < x_n = b$ the $n$ segments in $\mathcal{S}$ are given by
    \begin{equation*}
        s_i=[x_{i-1},x_i], \qquad i=1,\dots, n.
    \end{equation*}
    In this case, we have
    \begin{equation*}
        [a,b]=\bigcup_{i=1}^n s_i.
    \end{equation*}
    
    \item[(C2)] \textit{Segments with uniform arc-length}: the segments $s_i$ in the interval $I = [-1,1]$ are given by
\begin{equation*}
    s_i=[\cos(\tau_i+\rho), \cos(\tau_i-\rho)], \qquad i=1,\dots,n,
\end{equation*}
with $0<\tau_1<\dots<\tau_n<\pi$ and $0<\rho<\pi$.
    \item[(C3)] \textit{Segments with identical left endpoints}: where the segments are given by
    \begin{equation*}
        s_i=[a,x_i], \qquad i=1,\dots,n,
    \end{equation*}
    with the fixed left endpoint $a$ and the right endpoints given by an increasing sequence of points $a<x_1 < \ldots < x_n$. 
\end{itemize}
For the described scenarios, Chebyshev distributions for the end and mid-points, as well as Fekete-type techniques to determine the free parameters turned out to be surprisingly effective approaches to identify quasi-optimal segments for histopolation \cite{Bruno:2023:PIO,BruniErbFekete}. A case of great practical interest arises when the segments are based on a set of equispaced end-points. However, similar to classical polynomial interpolation, the histopolation of a function $f$ based on equispaced segments can lead to ill-conditioning in the calculation of the interpolating polynomial as soon as $n$ gets sufficiently large. This ill-conditioning in the equispaced setting can be visualized with a highly oscillatory error phenomenon close to the end-points of the interval referred to as Runge phenomenon. For segmental interpolation, it was studied numerically and analytically in \cite{BruniRunge,Bruno:2023:PIO} in terms of the Lebesgue constant, a measure for the numerical conditioning of an interpolation problem. There exist several possible strategies to mitigate ill-conditioning. In classical polynomial interpolation, some of these techniques have been, for instance, proposed in \cite{Boyd:2009:DRP,DeMarchi:2015:OTC,DeMarchi:2020:PIV,DellAccio:2022:GOT}.

The main goal of this paper is to generalize the mock-Chebyshev subset interpolation~\cite{Boyd:2009:DRP} and the constrained mock-Chebyshev least-squares approximation~\cite{DeMarchi:2015:OTC,DellAccio:2022:GOT} to histopolation problems, introducing three novel methods, specifically tailored for polynomial approximation based on average data of functions over equispaced segments. In defining these methods, we assume to work, without loss of generality, with the reference interval $I=[-1,1]$. The key idea of these methods is to reduce the number of available data to segments with a Chebyshev-type distribution. For this, we will theoretically analyse in Section \ref{section-stability} what happens if a numerical well-conditioned set of segments is perturbed and how well the growth of the respective Lebesgue constants is maintained. A detailed analysis of the introduced segmental mock-Chebyshev methods is then the core of Section~\ref{Sec2}. We will prove that two of these three new methods achieve quasi-optimal logarithmic growth rates for their Lebesgue constant. The numerical results presented in Section~\ref{SecNum} conclude this article and demonstrate the accuracy of the proposed methods. 

\section{Stability of the segmental Lebesgue constant} \label{section-stability}

\subsection{Numerical conditioning and the Lebesgue constant}

In practice, higher-order derivatives of a function $f$ are typically not available and the nodes $ \xi $ are not explicitly known. The error formulas in  Eqs. \eqref{eq:remainder} and \eqref{eq:remainderhistpolation} are therefore of limited practical value in assessing the quality of a particular choice of nodes or segments for function approximation. In the context of nodal interpolation, this task is usually given to the Lebesgue constant \cite[Eq. 4.1.9]{Rivlin}. For histopolation, an analog quantity can be used for this task \cite{Bruno:2023:PIO},  and more generally also for differential forms \cite{ARRLeb}. For this, let $ \mathcal{S} = \{ s_1, \ldots, s_n \} $ be a unisolvent set of segments for the space $ \mathbb{P}_{n-1} $.
For this set, we consider the respective Lagrange basis $ \{ \ell_{s_1}, \ldots \ell_{s_n} \} $ uniquely determined by the conditions
\begin{equation} \label{eq:lagrangefunctions}
    \frac{1}{|s_i|} \int_{s_j} \ell_{s_i}(x) \mathrm{d}x = \delta_{ij},
\end{equation}
where $\delta_{ij}$ is the Kronecker delta symbol.
With this at hand, the segmental Lebesgue constant is defined as
\begin{equation} \label{eq:LebSegm}
\Lambda_{n} (\mathcal{S}) := \sup_{s \subset I} \frac{1}{\left|s\right|} \sum_{i = 1}^n \left\vert \int_{s} \ell_{s_i} (x) \de x \right\vert = \max_{x\in I} \sum_{i=1}^n \left\lvert \ell_{s_i} (x) \right\rvert, 
\end{equation}
the last equality being granted by the mean value theorem \cite[Eq. (19)]{BruniErbFekete}.
This quantity measures the numerical conditioning of the generalized interpolation problem \cite[Sect. $ 7 $]{ABR22}, and it can be proven that under the assumptions of Proposition \ref{prop-1} it coincides with the operator norm (induced by the uniform norm) of the interpolation operator that maps a function $f$ onto the polynomial $p_{n-1}$ based on the conditions \eqref{eq:histopolatingconditions}, see \cite{Bruno:2023:PIO}. A thorough description of this quantity involving the above scenarios (C1), (C2) and (C3) is provided in \cite{Bruno:2023:PIO,BruniErbFekete}, where several interconnections between \eqref{eq:LebSegm} and the nodal Lebesgue constant have been shown. Moreover, if the segments shrink to a set of disjoint nodes, the Lebesgue constant $\Lambda_n(\mathcal{S})$ tends to the classical Lebesgue constant for nodal polynomial interpolation \cite{BruniErbFekete}. 

\subsection{Stability of segmental norming sets}

In the following, we use $\|f\| =\sup _{x \in I}|f(x)|$ to denote the uniform norm of a bounded function $f$ on the reference interval $I = [-1,1]$ and
\[\|f\|_{\mathcal{S}} =\sup_{s \in \mathcal{S}} \frac{1}{|s|} \left| \int_s f(x) \mathrm{d}x \right| \]
to denote the respective discrete counterpart related to the average information of the function $f$ on the segments $\mathcal{S}$. We call a set $\mathcal{S}$ a \emph{segmental norming set} for the space $\mathbb{P}_m$ of all polynomials of degree $m$ if there exists a norming constant $\lambda = \lambda(\mathcal{S},\mathbb{P}_m)$ such that the inequality 
$$
\|p\| \leq \lambda \|p\|_{\mathcal{S}}
$$
holds true for all $p \in \mathbb{P}_m$. For such norming sets, we have the following stability result. It is a direct generalization of a respective result \cite[Prop. $ 1 $]{PiazzonVianello} for nodal norming sets. 

\begin{proposition}[Stability related to segmental norming sets] \label{prop-main}
Assume that the set $\mathcal{S} = \{s_1, \ldots, s_n\}$ is a segmental norming set for $\mathbb{P}_m$ with norming constant $\lambda = \lambda(\mathcal{S},\Pi_m)$. Further, let $\mathcal{\tilde{S}} = \{\tilde{s}_1, \ldots, \tilde{s}_n\}$ consist of perturbations $\tilde{s}_i = [\tilde{\alpha}_i,\tilde{\beta}_i]$ of the segments $s_i = [\alpha_i,\beta_i]$ in $\mathcal{S}$ such that for $\varepsilon > 0$ we have
\[ |\alpha_i - \tilde{\alpha}_i| \leq \varepsilon \;\; \text{and} \;\; |\beta_i - \tilde{\beta}_i| \leq \varepsilon  \quad \text{for all $i = 1, \ldots, n$}.\]

If $\varepsilon$ is small enough in the sense that $\varepsilon=\frac{\alpha}{\lambda m^2}$ for some $0 < \alpha < 1$, then the following inequality holds true:
$$
\|p\| \leq \frac{\lambda}{1-\alpha}\|p\|_{\mathcal{\tilde{S}}} \quad \text{for all}\; p \in \mathbb{P}_m,
$$
i.e., also $\mathcal{\tilde{S}}$ is a segmental norming set for $\mathbb{P}_m$ with norming constant $\lambda/(1 - \alpha)$. 
\end{proposition}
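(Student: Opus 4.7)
The natural plan is to reduce the perturbation statement to a pointwise comparison of segment averages, and then close the estimate via Markov's inequality. Fix $p\in\mathbb{P}_m$ and, for each $i$, compare the averages of $p$ over $s_i=[\alpha_i,\beta_i]$ and $\tilde{s}_i=[\tilde{\alpha}_i,\tilde{\beta}_i]$. The key quantity to control is
\[
\Delta_i(p):=\frac{1}{|\tilde{s}_i|}\int_{\tilde{s}_i}p(x)\,\mathrm{d}x-\frac{1}{|s_i|}\int_{s_i}p(x)\,\mathrm{d}x,
\]
and I would like to bound it by a constant times $\varepsilon\,\|p'\|$, independently of $i$ and of the lengths of the segments.

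The cleanest way to obtain such a bound is to write the segment average as
\[
G(a,b):=\frac{1}{b-a}\int_a^b p(x)\,\mathrm{d}x=\int_0^1 p\bigl(a+t(b-a)\bigr)\,\mathrm{d}t,
\]
so that $\partial_a G=\int_0^1(1-t)\,p'(a+t(b-a))\,\mathrm{d}t$ and $\partial_b G=\int_0^1 t\,p'(a+t(b-a))\,\mathrm{d}t$. Each of these derivatives is bounded in modulus by $\tfrac12\|p'\|$, so by the mean value theorem applied to the two-variable function $G$ along the straight segment from $(\alpha_i,\beta_i)$ to $(\tilde{\alpha}_i,\tilde{\beta}_i)$,
\[
|\Delta_i(p)|\le \tfrac12\|p'\|\bigl(|\alpha_i-\tilde{\alpha}_i|+|\beta_i-\tilde{\beta}_i|\bigr)\le \varepsilon\,\|p'\|.
\]
This is the technical heart of the proof and the only step that requires some care; everything else is essentially bookkeeping.

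Next, I invoke the classical Markov inequality on $I=[-1,1]$, namely $\|p'\|\le m^2\|p\|$ for every $p\in\mathbb{P}_m$. This gives $|\Delta_i(p)|\le \varepsilon m^2\|p\|$ for every $i$, hence
\[
\|p\|_{\mathcal{S}}\le \|p\|_{\tilde{\mathcal{S}}}+\varepsilon m^2\|p\|.
\]
Multiplying by the norming constant $\lambda$ of $\mathcal{S}$ and using $\|p\|\le\lambda\|p\|_{\mathcal{S}}$,
\[
\|p\|\le \lambda\|p\|_{\tilde{\mathcal{S}}}+\lambda\varepsilon m^2\|p\|.
\]
With the choice $\varepsilon=\alpha/(\lambda m^2)$, the coefficient $\lambda\varepsilon m^2$ equals $\alpha<1$, so the term $\alpha\|p\|$ can be absorbed into the left-hand side, yielding $\|p\|\le \tfrac{\lambda}{1-\alpha}\|p\|_{\tilde{\mathcal{S}}}$, which is precisely the claim.

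The main obstacle is really just the first estimate: choosing a representation of the segment average that makes the dependence on the endpoints explicit and yields a bound in terms of $\|p'\|$ alone, rather than of $\|p'\|$ weighted by the (possibly very small) segment lengths $|s_i|$ and $|\tilde{s}_i|$. Once this is done, the polynomial nature of $p$ enters solely through Markov's inequality, and the norming-constant inheritance follows by a one-line algebraic rearrangement.
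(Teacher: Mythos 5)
Your proof is correct and follows essentially the same route as the paper: both arguments compare the averages over $s_i$ and $\tilde{s}_i$ through the common parametrization $t\mapsto (1-t)a+tb$ (the paper phrases this via an affine map $\phi$ with $|x-\phi(x)|\le\varepsilon$ and a pointwise estimate $|p(x)-p(\phi(x))|\le\varepsilon m^2\|p\|$, you via the gradient of the endpoint-dependent average $G(a,b)$ and the mean value theorem), arrive at the same bound $\|p\|_{\mathcal{S}}\le\|p\|_{\tilde{\mathcal{S}}}+\varepsilon m^2\|p\|$ using Markov's inequality, and close by the same absorption argument. The only cosmetic difference is the packaging of the average-comparison step; your version is a valid and slightly more symmetric way of writing the same computation.
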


\begin{proof} Let $p \in \mathbb{P}_m$ and $x, \tilde{x} \in I$ such that $|x - \tilde{x}| \leq \epsilon$. Then, Markov's inequality for algebraic polynomials of degree $m$ on $I$ provides the estimate
\begin{equation} \label{eq:prop1}
|p(x) - p(\widetilde{x})| = \left|\int_{\tilde{x}}^{x} p'(t) \mathrm{d} t\right| \leq 
\int_{\tilde{x}}^{x} |p'(t)| \mathrm{d} t 
\leq \varepsilon \sup_{x \in I}|p'(x)| \leq \varepsilon m^2 \|p\|.
\end{equation}

We consider now an arbitrary polynomial $p \in \mathbb{P}_m$. For this polynomial $p$, there exists a segment $s_i \in \mathcal{S}$ such that $\frac{1}{|s_i|} | \int_{s_i} p(x) \mathrm{d} x | =\|p\|_{\mathcal{S}}$. By the definition of the perturbed set $\mathcal{\tilde{S}}$ there exists also an affine linear and monotonically increasing function $\phi: \mathbb{R} \to \mathbb{R}$ such that $\phi(s_i) = \tilde{s}_i$ and $|x - \phi(x)| \leq \varepsilon$ for all $x \in s_i$. Using the estimate in \eqref{eq:prop1} in combination with trivial bounds and inequalities for the involved integrals, we get the bound
\begin{align*}\frac{1}{|s_i|} \left| \int_{s_i} p(x) \mathrm{d} x \right| & 
\leq \frac{1}{|s_i|} \left| \int_{s_i} p(\phi(x)) \mathrm{d} x \right| + \frac{1}{|s_i|} \left| \int_{s_i} p(x) - p(\phi(x)) \mathrm{d} x \right| \\
& \leq \frac{1}{|\tilde{s}_i|} \left| \int_{\tilde{s}_i} p(x) \mathrm{d} x \right| + \frac{1}{|s_i|} \int_{s_i} |p(x) - p(\phi(x))| \mathrm{d} x  \\
& \leq \frac{1}{|\tilde{s}_i|} \left| \int_{\tilde{s}_i} p(x) \mathrm{d} x \right| + \sup_{x \in s_i} |p(x) - p(\phi(x))| \\
& \leq \|p\|_{\mathcal{\tilde{S}}} + \varepsilon m^2 \|p\|.
\end{align*} 

Now, using this bound and the fact that $\mathcal{S}$ is a segmental norming set for $\mathbb{P}_m$ we can conclude 
$$
\|p\| \leq \lambda \|p\|_{\mathcal{S}} \leq \lambda \|p\|_{\mathcal{\tilde{S}}} + \varepsilon m^2 \lambda \|p\|.
$$
Therefore, since $\varepsilon=\frac{\alpha}{\lambda m^2}$ with $0<\alpha<1$, we get the statement of the proposition. \qed
\end{proof}

\begin{remark} \label{rem-1}
The statement of Proposition \ref{prop-main} remains consistently true if the segment set $\mathcal{S}$ is replaced by a node set (i.e., if the intervals $s_i$ shrink to a single point) and the elements in $\mathcal{\tilde{S}}$ either consist of small segments or nodes in $\varepsilon$-balls around these nodes. In this case, in the proof of Proposition \ref{prop-main} the averages $\frac{1}{|s_i|} \int_{s_i} p(x) \mathrm{d} x $ have to be replaced by point evaluations. This is consistent with the fact that nodal polynomial interpolations can be seen as a limit case of segmental polynomial interpolation in which segments shrink to a single node, as discussed in \cite{BruniErbFekete}. In this sense, Proposition \ref{prop-main} can be regarded as an extension of the nodal stability result \cite[Prop. $ 1 $]{PiazzonVianello} to norming sets that contain point evaluations and function averages over segments. 
\end{remark}

Proposition \ref{prop-main} provides directly the following result on the stability of the Lebesgue constants.

\begin{theorem}[Stability of the segmental Lebesgue constant] \label{thm-main}
Assume that $\mathcal{S} = \{s_1, \ldots, s_n\}$ is a unisolvent set of segments for $\mathbb{P}_{n-1}$ with the Lebesgue constant $\Lambda_n(\mathcal{S})$. Further, let $\mathcal{\tilde{S}} = \{\tilde{s}_1, \ldots, \tilde{s}_n\}$ be a perturbation of $\mathcal{S}$ such that $\tilde{s}_i = [\tilde{\alpha}_i,\tilde{\beta}_i]$ and $s_i = [\alpha_i,\beta_i]$ satisfy
\[ |\alpha_i - \tilde{\alpha}_i| \leq \varepsilon \;\; \text{and} \;\; |\beta_i - \tilde{\beta}_i| \leq \varepsilon  \quad \text{for all $i = 1, \ldots, n$ and $\varepsilon > 0$}.\]
If $\varepsilon \leq \frac{\alpha}{\Lambda_n(\mathcal{S}) (n-1)^2}$ for some $0 < \alpha < 1$, then also $\mathcal{\tilde{S}}$ is unisolvent for $\mathbb{P}_{n-1}$ and the unique operator $\Pi_n(\mathcal{\tilde{S}})$ mapping a function to its segmental polynomial interpolant in $\mathbb{P}_{n-1}$ satisfies
$$
\| \Pi_n(\mathcal{\tilde{S}}) \|_{\mathrm{op}} \leq \frac{1}{1-\alpha} \Lambda_n(\mathcal{S}).
$$
Note that the operator norm $ \| \Pi_n(\mathcal{\tilde{S}}) \|_{\mathrm{op}}$ with respect to the uniform norm corresponds to the segmental Lebesgue constant $\Lambda_n(\mathcal{\tilde{S}})$ if the segments in $\mathcal{\tilde{S}}$ are non-overlapping in the sense described in the assumptions of Proposition \ref{prop-1}.
\end{theorem}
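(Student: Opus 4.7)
The plan is to derive the theorem directly from Proposition \ref{prop-main} once the Lebesgue constant is recognised as an admissible norming constant for $\mathcal{S}$ with respect to $\mathbb{P}_{n-1}$. First, I would verify that $\mathcal{S}$ is a segmental norming set for $\mathbb{P}_{n-1}$ with norming constant $\lambda = \Lambda_n(\mathcal{S})$. This follows from the Lagrange expansion
$$ p(x) = \sum_{i=1}^n \mu_i(p)\, \ell_{s_i}(x), \qquad p \in \mathbb{P}_{n-1}, $$
which, together with the defining relation \eqref{eq:lagrangefunctions}, gives $|p(x)| \leq \|p\|_{\mathcal{S}} \sum_{i=1}^n |\ell_{s_i}(x)|$ and hence $\|p\| \leq \Lambda_n(\mathcal{S}) \|p\|_{\mathcal{S}}$ by \eqref{eq:LebSegm}. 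This step uses in an essential way that $n$ matches $\dim \mathbb{P}_{n-1}$, since otherwise the dual Lagrange basis $\{\ell_{s_i}\}$ would not be available.

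Next, I would apply Proposition \ref{prop-main} with $m = n-1$ and $\lambda = \Lambda_n(\mathcal{S})$. The bound on $\varepsilon$ in the hypothesis is exactly the threshold $\alpha/(\lambda m^2)$ required there, so the proposition immediately delivers
$$ \|p\| \leq \frac{\Lambda_n(\mathcal{S})}{1-\alpha}\, \|p\|_{\tilde{\mathcal{S}}} \qquad \text{for all } p \in \mathbb{P}_{n-1}. $$
Unisolvence of $\tilde{\mathcal{S}}$ follows from this in one line: if some $p \in \mathbb{P}_{n-1}$ lies in the kernel of every functional associated with $\tilde{\mathcal{S}}$, then $\|p\|_{\tilde{\mathcal{S}}} = 0$ forces $\|p\| = 0$, so the $n$ functionals are linearly independent and the Gramian \eqref{eq:gramian} is non-singular.

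Finally, to bound $\|\Pi_n(\tilde{\mathcal{S}})\|_{\mathrm{op}}$, I would fix an essentially bounded $f$ on $I$, set $\tilde{p} := \Pi_n(\tilde{\mathcal{S}}) f \in \mathbb{P}_{n-1}$, and note that the interpolation conditions give $\|\tilde{p}\|_{\tilde{\mathcal{S}}} = \|f\|_{\tilde{\mathcal{S}}} \leq \|f\|$. Plugging this into the norming inequality just obtained for $\tilde{\mathcal{S}}$ yields
$$ \|\tilde{p}\| \leq \frac{\Lambda_n(\mathcal{S})}{1-\alpha}\, \|f\|_{\tilde{\mathcal{S}}} \leq \frac{\Lambda_n(\mathcal{S})}{1-\alpha}\, \|f\|, $$
which is the claimed operator norm bound. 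I do not expect any serious obstacle here: the analytic heart of the argument (the Markov estimate and the affine comparison between $s_i$ and $\tilde{s}_i$) has already been carried out in Proposition \ref{prop-main}, and the theorem is a clean translation of that result into the language of segmental interpolation operators. The only conceptual point that deserves care is the identification of $\Lambda_n(\mathcal{S})$ as the appropriate norming constant to feed into the proposition; everything else is formal.
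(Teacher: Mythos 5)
Your proof is correct and follows essentially the same route as the paper: identify $\Lambda_n(\mathcal{S})$ as the norming constant, invoke Proposition \ref{prop-main} with $m=n-1$, deduce unisolvence from injectivity of the sampling map, and convert the norming inequality into the operator norm bound. You actually fill in two steps the paper only asserts by citation — the Lagrange-expansion argument showing $\|p\|\leq\Lambda_n(\mathcal{S})\|p\|_{\mathcal{S}}$ and the explicit chain $\|\tilde p\|_{\tilde{\mathcal{S}}}=\|f\|_{\tilde{\mathcal{S}}}\leq\|f\|$ — which is a welcome addition rather than a deviation.
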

\begin{proof}
In the setting of segmental polynomial interpolation, the Lebesgue constant $\Lambda_n(\mathcal{S})$ corresponds to a norming constant $\lambda(\mathcal{S},\mathbb{P}_{n-1})$ for the space $\mathbb{P}_{n-1}$. Therefore, Proposition \ref{prop-main} states that also a perturbed set $\mathcal{\tilde{S}}$ of segments with $\varepsilon$ satisfying $\varepsilon \leq \frac{\alpha}{\Lambda_n(\mathcal{S}) (n-1)^2}$ provides a norming set for $\mathbb{P}_{n-1}$. This implies that the linear mapping
\[ \mathbb{P}_{n-1} \to \mathbb{R}^n,\quad p \mapsto \left( \frac{1}{|\tilde{s}_1|} \int_{\tilde{s}_1} p(x) \mathrm{d} x, \ldots, \frac{1}{|\tilde{s}_n|} \int_{\tilde{s}_n} p(x) \mathrm{d} x \right)\]
is bijective, and therefore, that $\mathcal{\tilde{S}}$ is unisolvent for $\mathbb{P}_{n-1}$. In addition, the bound of the norming constant in Proposition \ref{prop-main} provides the bound $\| \Pi_n(\mathcal{\tilde{S}}) \|_{\mathrm{op}} \leq \frac{1}{1-\alpha} \Lambda_n(\mathcal{S})$ for the inverse of this mapping. It is shown in \cite{BruniErbFekete} that the operator norm $\| \Pi_n(\mathcal{\tilde{S}}) \|_{\mathrm{op}}$ corresponds to the Lebesgue constant $\Lambda_n(\mathcal{\tilde{S}})$ of the set $\mathcal{\tilde{S}}$ if the segments $\tilde{s}_i$ in $\mathcal{\tilde{S}}$ overlap at most in their endpoints. \qed
\end{proof}

\begin{remark} \label{rem-2}
Similarly as discussed in Remark \ref{rem-2}, the statement of Proposition \ref{thm-main} remains also true if the segment set $\mathcal{S}$ is replaced by a point set. In this case, the elements of $\mathcal{\tilde{S}}$ are small segments (or nodes) close to this initial point set and the operator norm $\| \Pi_n(\mathcal{\tilde{S}}) \|_{\mathrm{op}}$ can bounded in terms of a nodal Lebesgue constant.  
\end{remark}

\section{Polynomial interpolation on mock-Chebyshev segments} \label{Sec2}
On the reference interval $I=[-1,1]$, we consider the equispaced grid of $n+1$ nodes given as 
\begin{equation}\label{eqGrid}
    X^{\mathrm{eq}}_{n+1}=\left\{x_i=-1+\frac{2}{n}i\, :\, i=0,\dots,n\right\}
\end{equation}
and the respective set of uniform segments
\begin{equation}\label{setsegmeq}
    \mathcal{S}_{n}^{\mathrm{eq}}=\left\{s_1,\dots,s_n\right\},
\end{equation}
where the equispaced segments
\begin{equation}\label{segseq}
    s_i=[x_{i-1},x_{i}], \qquad i=1,\dots,n,
\end{equation}
contain the nodes of $X^{\mathrm{eq}}_{n+1}$ as endpoints. 
Assuming to know the average values
\begin{equation}\label{hypoth}
\mu_i(f):=\frac{2}{n}\int_{x_{i-1}}^{x_{i}}f(x) \de x, \qquad i=1,\dots,n,
\end{equation}
of a bounded function $f : I \to \mathbb{R}$, our goal is to determine a real-valued polynomial $p$ that approximates the available data in the sense that
\begin{equation*}
\mu_i(p)\approx \mu_i(f), \qquad i=1,\dots, n.
\end{equation*}
As a first attempt, one could calculate a polynomial histopolant in the space $\mathbb{P}_{n-1}$ of polynomials of degree at most $n-1$ using the averages over all segments in $\mathcal{S}_{n}^{\mathrm{eq}}$. This provides an interpolating polynomial 
 \begin{equation*}
     p^{\mathrm{eq}}_{n-1}\in\mathbb{P}_{n-1}
 \end{equation*}
that satisfies the histopolation conditions
\begin{equation*}
    \mu_i\left( p^{\mathrm{eq}}_{n-1}\right)=\mu_i(f), \qquad i=1,\dots,n.
\end{equation*}
However, as shown in~\cite{Bruno:2023:PIO}, this approach does not produce satisfactory results due to the ill-conditioned nature of the interpolation problem on uniform segments. 

\vspace{2mm}

In this paper, we aim to extend the mock-Chebyshev subset interpolation method and the constrained mock-Chebyshev least-squares approximation method, described in~\cite{Boyd:2009:DRP,DeMarchi:2015:OTC,DellAccio:2022:GOT}, to the case of histopolation and under the assumption of knowing the integrals of the function $f$ over the segments of the set $\mathcal{S}_{n}^{\mathrm{eq}}$ in \eqref{setsegmeq}. Specifically, we introduce three distinct methods to adapt the advantageous properties of the mock-Chebyshev approach for an accurate and numerically stable approximation of functions. In particular, these approximation techniques aim to mitigate the Runge phenomenon in histopolation, which can occur if a function is interpolated with a polynomial of large degree $n-1$ using $n$ equispaced segments based on the endpoints in $X_{n+1}^{\mathrm{eq}}$.

Nodal mock-Chebyshev subset interpolation uses nodal evaluations of a (continuous) function $f$ on a proper subset of the  equispaced set $X_{n+1}^{\mathrm{eq}}$ in~\eqref{eqGrid} to generate a low-order polynomial interpolant of the function $f$. More specifically, a proper subset of $m+1$ nodes
\begin{equation}\label{mock-Chebnods}
    X_{m+1}^{\mathrm{MC}}=\left\{x^{\mathrm{MC}}_0,\dots, x^{\mathrm{MC}}_m\right\},
\end{equation}
referred to as \textit{mock-Chebyshev} nodes~\cite{Boyd:2009:DRP,Ibrahimoglu:2020:AFA,Ibrahimoglu:2024:ANF}, 
is extracted from the equispaced set $X_{n+1}^{\mathrm{eq}}$. These nodes are chosen to mimic the behavior of the Chebyshev-Lobatto nodes as closely as possible. Mathematically, this is achieved by solving the minimization problem
\begin{equation}\label{Probmock}
\min_{k=0,\dots,n}\left\lvert x_k-x^{\mathrm{CL}}_i \right\rvert^2, \qquad i=0,\dots,m,
\end{equation}
where 
\begin{equation}\label{ChebLobNodes}
    x_i^{\mathrm{CL}}= - \cos\left(\nu_{m-i}\right), \qquad \nu_i=\frac{\pi}{m}i, \qquad i=0,\dots,m,
\end{equation}
denote the Chebyshev-Lobatto nodes $X^{\mathrm{CL}}_{m+1}$ in $I$. 
A maximal number $m$ can be determined such that the nodes solving the minimization problem are distinct. This value was computed in~\cite{Boyd:2009:DRP} and it is equal to
\begin{equation}   \label{valueofm}
    m=\left\lfloor \pi \sqrt{\frac{n}{2}} \right\rfloor.
\end{equation}
This identity is optimal in the sense that it provides the largest possible value $m$ to still identify $ m $ different mock-Chebyshev nodes. Once the number $m$ is fixed, we can thus increase the value $ n $ by any larger natural number to control the distance between a Chebyshev node and a mock-Chebyshev one. We may formalise this fact in the following way (see also \cite{DeMarchi:2015:OTC}).
\begin{lemma} \label{lem:closesegments}
    Let $ \varepsilon > 0 $ and $m \in \mathbb{N}$ be fixed. Choosing $n \in \mathbb{N}$ such that $$ n \geq \max \left \{  m^2 \frac{2}{\pi^2} , \, \frac{1}{\epsilon}\right \} $$ we get unique mock-Chebyshev nodes $x_i^{\mathrm{MC}}$ such that $ \left\lvert x_i^{\mathrm{CL}} - x_i^{\mathrm{MC}} \right\rvert \leq \varepsilon $ for all $ i=0,\dots,m$.
\end{lemma}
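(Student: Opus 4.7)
The plan is to decouple the two bounds in the hypothesis $n \geq \max\{2m^2/\pi^2, \,1/\varepsilon\}$ and exploit each of them for a distinct purpose: the first one to guarantee distinctness of the selected nodes, and the second one for the proximity estimate.

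For the distinctness step, I would observe that $n \geq 2m^2/\pi^2$ is equivalent to $m \leq \pi\sqrt{n/2}$, and since $m \in \mathbb{N}$ this further gives $m \leq \lfloor \pi\sqrt{n/2}\rfloor$. The optimality statement recalled in \eqref{valueofm} then ensures that with this value of $m$ the minimization problem \eqref{Probmock} admits $m+1$ pairwise distinct minimizers inside $X_{n+1}^{\mathrm{eq}}$. In particular, each mock-Chebyshev node $x_i^{\mathrm{MC}}$ is unambiguously defined as the unique element of $X_{n+1}^{\mathrm{eq}}$ closest to the corresponding Chebyshev--Lobatto node $x_i^{\mathrm{CL}}$ from \eqref{ChebLobNodes}.

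For the proximity step, I would invoke the elementary covering property of the uniform grid: consecutive nodes of $X_{n+1}^{\mathrm{eq}}$ are spaced by $2/n$, so every point of $I=[-1,1]$ lies within distance $1/n$ of some element of $X_{n+1}^{\mathrm{eq}}$. Specializing this to $y = x_i^{\mathrm{CL}}$ and using $n \geq 1/\varepsilon$ then gives $|x_i^{\mathrm{CL}} - x_i^{\mathrm{MC}}| \leq 1/n \leq \varepsilon$ for every $i = 0, \ldots, m$, which is the desired bound.

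I do not anticipate any serious obstacle: both steps are elementary and the argument avoids re-deriving the sharp bound behind \eqref{valueofm} by simply citing it. The only subtle point is to make sure that the nearest equispaced neighbor of $x_i^{\mathrm{CL}}$ used in the proximity step actually coincides with the minimizer $x_i^{\mathrm{MC}}$ of \eqref{Probmock}; this is precisely why the distinctness step must be carried out first, so that the selection rule in \eqref{Probmock} has no ambiguity.
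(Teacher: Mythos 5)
Your proposal is correct and follows essentially the same route as the paper's proof: it invokes the optimality of \eqref{valueofm} from \cite{Boyd:2009:DRP} to secure distinctness of the mock-Chebyshev nodes under $n \geq 2m^2/\pi^2$, and then uses the fact that every point of $I$ lies within $1/n$ of the equispaced grid $X_{n+1}^{\mathrm{eq}}$ together with $n \geq 1/\varepsilon$ to obtain the proximity bound. Your additional remark that the nearest-neighbor used in the proximity step coincides with the minimizer of \eqref{Probmock} is a welcome clarification, but the argument is the same.
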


\begin{proof}
By \eqref{valueofm}, the condition $n \geq m^2 \frac{2}{\pi^2}$ guarantees the unique extractability of the mock-Chebyshev points from the equispaced grid $X_{n+1}^{\mathrm{eq}}$(cf. \cite{Boyd:2009:DRP}). Further, the maximal distance of a point $x \in I$ from a point $X_{n+1}^{\mathrm{eq}}$ is given by $1/n$. This proves the claim of the lemma. \qed
\end{proof}

In mock-Chebyshev subset interpolation, the nodes from the complementary set
\begin{equation}\label{compl}
   X^{\prime}=X_{n+1}^{\mathrm{eq}}\setminus X_{m+1}^{\mathrm{MC}}
\end{equation}
are not used. A natural extension of mock-Chebyshev interpolation is therefore a respective regression approach. The idea of the constrained mock-Chebyshev least-squares approximation is to approximate the function $f$ with a polynomial of degree $r>m$ obtained by interpolating the function $f$ on the mock-Chebyshev nodes, and using the nodes of $X^{\prime}$ to improve the accuracy of the approximation through a simultaneous regression~\cite{DeMarchi:2015:OTC,DellAccio:2022:GOT}. This technique has been successfully applied in various applications~\cite{DellAccio:2022:CMC,DellAccio:2022:AAA,DellAccio:2023:PIR,DellAccio:2024:PAO,DellAccio:2024:NAO, DellAccio:2024:AEO}.

In the following, we introduce now three new approximation methods designed for polynomial approximation based on integral information on subsets of equispaced segments. We will refer to them as the \emph{concatenated segmental mock-Chebyshev method}, the \emph{quasi-nodal segmental mock-Chebyshev method}, and the \emph{constrained segmental mock-Chebyshev method}.

\subsection{Concatenated segmental mock-Chebyshev method}
\label{sub1}
The key idea of this method is to subdivide the interval $[-1,1]$ into segments whose endpoints are the mock-Chebyshev nodes~\eqref{mock-Chebnods}. By exploiting the linearity of the integral and the available data~\eqref{hypoth}, we can perform the interpolation over the segments of this partition, see Fig.~\ref{prbseg}. This approach aims to harness the quasi-optimality of the Chebyshev--Lobatto nodes as endpoints for segments in segmental polynomial interpolation \cite{Bruno:2023:PIO}. For this, we denote by $ \mathcal{S}_m^{\mathrm{CL}} :=\left\{ s_1^{\mathrm{CL}}, \ldots, s_m^{\mathrm{CL}}\right\}$ the set of Chebyshev segments 
$s_i^{\mathrm{CL}} = [x_{i-1}^{\mathrm{CL}},x_i^{\mathrm{CL}}]$, $i = 1, \ldots, m$ with the Chebyshev--Lobatto nodes $x_i^{\mathrm{CL}}$ as endpoints. Approximating the Chebyshev--Lobatto nodes $x_i^{\mathrm{CL}}$ with the mock-Chebyshev nodes $x_{i}^{\mathrm{MC}}$ we get the \textit{mock-Chebyshev segments}
\begin{equation} \label{eq:MCsegments}
    s^{\mathrm{MC}}_i=\left[x_{i-1}^{\mathrm{MC}},x_{i}^{\mathrm{MC}}\right], \qquad i=1,\dots,m,
\end{equation}
as approximations of the Chebyshev segments $s_i^{\mathrm{CL}}$. For these mock-Chebyshev segments, 
we denote by
\begin{equation*}
\mu_{i}^{\mathrm{MC}}(f)= \frac{1}{|x_{i}^{\mathrm{MC}} - x_{i-1}^{\mathrm{MC}}|}\int_{x_{j-1}^{\mathrm{MC}}}^{x_{i}^{\mathrm{MC}}} f(x) \de x, \qquad i=1,\dots,m,
\end{equation*}
the averages of the function $f$ over the set of segments
\begin{equation*}
    \mathcal{S}_m^{\mathrm{MC}}=\left\{s_1^{\mathrm{MC}},\dots,s_m^{\mathrm{MC}}\right\}.
\end{equation*}
Exploiting the linearity of the integral and the uniformity of the segments in $\mathcal{S}_{n}^{\mathrm{eq}}$, we observe that
\begin{equation}\label{relintegrals}
\mu_i^{\mathrm{MC}}(f)= \frac{1}{k_i}\sum_{\ell=\iota_i+1}^{\iota_i+k_i}\mu_{\ell}(f), \qquad i=1,\dots,m,
\end{equation}
where $\mu_{\ell}$ is defined in~\eqref{hypoth} and the indices $\iota_i$ and $k_i$ are uniquely determined by
\begin{equation*}
x_{\iota_j}=x_{j-1}^{\mathrm{MC}}, \qquad x_{\iota_j+k_j}=x_{j}^{\mathrm{MC}}.
\end{equation*}
With this, we further have
\begin{equation*}
 x^{\mathrm{MC}}_{j-1}=x_{\iota_{j}}<x_{\iota_{j}+1}<\cdots<x_{\iota_{j}+k_j}=x^{\mathrm{MC}}_{j}
\end{equation*}
and
\begin{equation}\label{segconc}
 s_j^{\mathrm{MC}}= s_{\iota_j+1}\cup s_{\iota_j+2}\cup \dots \cup s_{\iota_j+k_j}.
\end{equation}
The set of segments $\mathcal{S}_m^{\mathrm{MC}}$ belongs to the class (C1) of concatenated segments. Further, as the segments $\mathcal{S}_m^{\mathrm{MC}}$ approximate the Chebyshev segments $\mathcal{S}_m^{\mathrm{CL}}$, they provide an approximation to segments lying in the intersection of the classes (C1) and (C2), both discussed in \cite[Example $1.1$]{Bruno:2023:PIO}. For the class (C1), a general result on the unisolvence of the polynomial interpolation is provided in \cite[Prop. 3.1]{Bruno:2023:PIO}. 
Although the approximation of the mock-Chebyshev segments is based on a concatenation of uniform segments, the Lebesgue constants associated with the segments $ \mathcal{S}_m^{\mathrm{MC}} $ display a slow logarithmic growth. This logarithmic trend of the Lebesgue constant can be seen numerically, cf. Fig. \ref{fig:LebCheb}, but also shown analytically under particular assumptions.  

\begin{theorem} \label{thm:stabilityconcatenated}
For $m \in \mathbb{N}$, let 
$$ n \geq \frac{c^{\mathrm{CL}}}{\alpha} m^2 \left( \log m + \frac{\pi}{2}\right), $$
with a fixed $0 < \alpha< 1$, and a constant $c^{\mathrm{CL}} \geq 1$ independent of $m$, $n$ and related to the Lebesgue constant $\Lambda_{m} (\mathcal{S}_{m}^{\mathrm{CL}})$ of the Chebyshev segments $\mathcal{S}_{m}^{\mathrm{CL}}$. 
Then, the Lebesgue constant $ \Lambda_{m} (\mathcal{S}_m^{\mathrm{MC}}) $ associated with the concatenated mock-Chebyshev segments $ \mathcal{S}_m^{\mathrm{MC}} = \{ s_1^{\mathrm{MC}}, \ldots, s_m^{\mathrm{MC}} \} $ grows at most logarithmically with the bound
\[ \Lambda_{m} (\mathcal{S}_{m}^{\mathrm{MC}}) \leq \frac{c^{\mathrm{CL}}}{1-\alpha} \left( \log m + \frac{\pi}{2}\right).\]
\end{theorem}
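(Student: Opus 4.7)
The plan is to view $\mathcal{S}_m^{\mathrm{MC}}$ as a small perturbation of the Chebyshev segment set $\mathcal{S}_m^{\mathrm{CL}}$ and then invoke the stability result Theorem \ref{thm-main}. Concretely, I would first record the assumption (implicit in the hypothesis of the theorem via the constant $c^{\mathrm{CL}}$) that the Chebyshev segments satisfy the logarithmic bound
\[ \Lambda_m(\mathcal{S}_m^{\mathrm{CL}}) \leq c^{\mathrm{CL}} \left( \log m + \tfrac{\pi}{2}\right), \]
which is what $c^{\mathrm{CL}}$ is meant to encode (and which is known either from the analysis in \cite{Bruno:2023:PIO,BruniErbFekete} or as a segmental analogue of the classical Chebyshev--Lobatto estimate).

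Next I would use Lemma \ref{lem:closesegments} to bound the perturbation between the endpoints of $s_i^{\mathrm{CL}}$ and $s_i^{\mathrm{MC}}$: since the mock-Chebyshev nodes are drawn from the equispaced grid $X_{n+1}^{\mathrm{eq}}$ whose mesh size is $2/n$, choosing $\varepsilon = 1/n$ yields $|x_i^{\mathrm{CL}} - x_i^{\mathrm{MC}}| \leq \varepsilon$ for all $i$ (provided $n \geq 2m^2/\pi^2$, which will be automatic from the hypothesis). In the notation of Theorem \ref{thm-main}, this means $\tilde{s}_i := s_i^{\mathrm{MC}}$ is a perturbation of $s_i := s_i^{\mathrm{CL}}$ with both endpoint deviations bounded by $\varepsilon = 1/n$.

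I would then check the smallness condition $\varepsilon \leq \alpha/(\Lambda_m(\mathcal{S}_m^{\mathrm{CL}})(m-1)^2)$ required by Theorem \ref{thm-main}. Using the bound on $\Lambda_m(\mathcal{S}_m^{\mathrm{CL}})$ and $(m-1)^2 \leq m^2$, it suffices to have $1/n \leq \alpha/(c^{\mathrm{CL}}(\log m + \pi/2)\, m^2)$, i.e.\ exactly the hypothesis $n \geq (c^{\mathrm{CL}}/\alpha)\, m^2 (\log m + \pi/2)$. (One should also observe that this hypothesis implies $n \geq 2m^2/\pi^2$, so Lemma \ref{lem:closesegments} applies.) Since the $s_i^{\mathrm{MC}}$ are concatenations of consecutive equispaced subintervals and hence non-overlapping, the conclusion of Theorem \ref{thm-main} reads
\[ \Lambda_m(\mathcal{S}_m^{\mathrm{MC}}) \;=\; \| \Pi_m(\mathcal{S}_m^{\mathrm{MC}})\|_{\mathrm{op}} \;\leq\; \frac{1}{1-\alpha}\, \Lambda_m(\mathcal{S}_m^{\mathrm{CL}}) \;\leq\; \frac{c^{\mathrm{CL}}}{1-\alpha}\left(\log m + \tfrac{\pi}{2}\right), \]
which is the desired bound.

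The only real subtlety is the bookkeeping of the two indices: Theorem \ref{thm-main} is stated with $n$ being the number of segments and the degree bound $n-1$, whereas here the number of segments is $m$; so one must apply it with its internal $n$ replaced by $m$, whence the factor $(m-1)^2$. Likewise the $n$ appearing in the present theorem plays the role only of controlling the perturbation size $\varepsilon = 1/n$ via Lemma \ref{lem:closesegments}. Apart from this (and verifying that the hypothesis on $n$ dominates the requirement $n \geq 2m^2/\pi^2$ from that lemma), the argument is a direct chaining of the stability result with the mock-Chebyshev approximation lemma, and I do not foresee further technical obstacles.
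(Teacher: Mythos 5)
Your proposal is correct and follows essentially the same route as the paper's own proof: invoke the bound $\Lambda_m(\mathcal{S}_m^{\mathrm{CL}}) \leq c^{\mathrm{CL}}(\log m + \pi/2)$, use Lemma \ref{lem:closesegments} to control the endpoint perturbation by $\varepsilon = 1/n$, verify the smallness condition of Theorem \ref{thm-main}, and conclude. Your extra bookkeeping (the substitution of the internal $n$ of Theorem \ref{thm-main} by $m$, the bound $(m-1)^2 \leq m^2$, and the check that the hypothesis dominates $n \geq 2m^2/\pi^2$) is exactly what the paper leaves implicit.
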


\begin{remark}
Numerical comparisons indicate that the constant $c^{\mathrm{CL}}$ in Theorem \ref{thm:stabilityconcatenated} can be bounded by $2$. This gets also visible in Fig. \ref{fig:LebCheb}. 
\end{remark}

\begin{proof}
The Lebesgue constant associated to the Chebyshev segments $ \mathcal{S}_{m}^{\mathrm{CL}}$ is bounded by
    \begin{equation} \label{eq:LebCL}
        \Lambda_{m} (\mathcal{S}_{m}^{\mathrm{CL}}) \leq c^{\mathrm{CL}} \left( \log m + \frac{\pi}{2}\right),
    \end{equation}
with a constant $ c^{\mathrm{CL}} > 0$ given as the uniform bound of a sequence of operator norms, see \cite[Corollary 5.7]{Bruno:2023:PIO}. We choose $n$ and $\varepsilon > 0$ such that
\[n \geq \frac{1}{ \varepsilon} = \frac{c^{\mathrm{CL}}}{\alpha} m^2 \left( \log m + \frac{\pi}{2}\right).\]
Then, Lemma \ref{lem:closesegments} guarantees the existence and uniqueness of the mock-Chebyshev segments $\mathcal{S}_{m}^{\mathrm{MC}}$ and the stability result of Theorem \ref{thm-main} provides the estimate
$$
\Lambda_m(\mathcal{S}_{m}^{\mathrm{MC}}) \leq \frac{1}{1-\alpha} \Lambda_m(\mathcal{S}_{m}^{\mathrm{CL}}) \leq \frac{c^{\mathrm{CL}}}{1-\alpha} \left( \log m + \frac{\pi}{2}\right)
$$
for the Lebesgue constant of the mock-Chebyshev nodes. \qed
\end{proof}

\begin{figure}
\centering
\includegraphics[width=.8\textwidth]{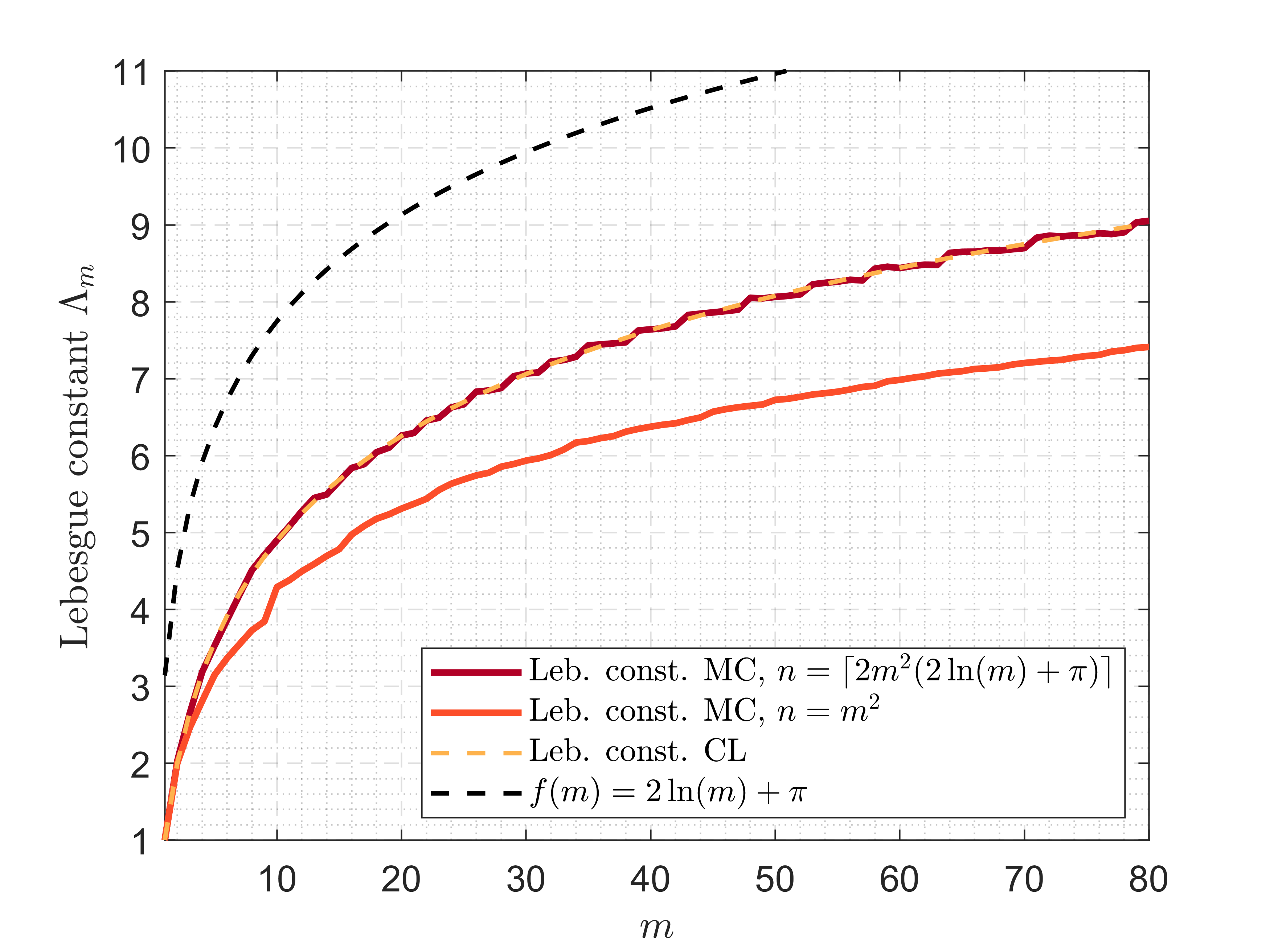}\hfill
    \caption{The Lebesgue constant $ \Lambda_m(\mathcal{S}_{m}^{\mathrm{MC}}) $ of the mock-Chebyshev segments $ \mathcal{S}_{m}^{\mathrm{MC}}$ depending on the selection of the equidistant grid size $n$. The Lebesgue constant $\Lambda_m(\mathcal{S}_{m}^{\mathrm{CL}})$ for the Chebyshev segments is highlighted in yellow}
 \label{fig:LebCheb} 
\end{figure}

{\bfseries Calculation of the segmental polynomial approximant.} Exploiting the linearity of the integral, the mean value of the function $f$ on the mock-Chebyshev segments $s_j^{\mathrm{MC}}$ can be calculated by averaging the integral information over the smaller equispaced segments $s_{\iota_j+1}, \dots,s_{\iota_j+k_j}$, as for instance illustrated in Fig.~\ref{prbseg}. By fixing a basis 
\begin{equation*}
    \mathcal{B}_{m}=\left\{u_1,\dots,u_{m}\right\}
\end{equation*}
of the polynomial space $\mathbb{P}_{m-1}$, the unique polynomial satisfying the histopolation conditions \eqref{eq:histopolatingconditions} can be written as
\begin{equation}\label{polPMC}
p^{\mathrm{MC}}_{m-1}(x)=p^{\mathrm{MC}}_{m-1}(f,x)=\sum_{i=1}^{m} a^{\mathrm{MC}}_i u_i(x),  
\end{equation}
where the vector $\boldsymbol{a}=\left[a^{\mathrm{MC}}_1,\dots,a^{\mathrm{MC}}_{m}\right]^T$ is the solution of the following linear system
  \begin{equation*}
\begin{bmatrix}
\mu^{\mathrm{MC}}_{1}(u_1) & \mu^{\mathrm{MC}}_{1}(u_2) & \cdots & \mu^{\mathrm{MC}}_{1}(u_{m})\\
\mu^{\mathrm{MC}}_{2}(u_1) & \mu^{\mathrm{MC}}_{2}(u_2) & \cdots & \mu^{\mathrm{MC}}_{2}(u_{m})\\
\vdots  & \vdots  & \ddots & \vdots  \\
\mu^{\mathrm{MC}}_{m}(u_1) & \mu^{\mathrm{MC}}_{m}(u_2) & \cdots & \mu^{\mathrm{MC}}_{m}(u_{m})\\
\end{bmatrix}
\begin{bmatrix}
a^{\mathrm{MC}}_1 \\
a^{\mathrm{MC}}_2\\
\vdots\\
a^{\mathrm{MC}}_{m}
\end{bmatrix}=
\begin{bmatrix}
\mu^{\mathrm{MC}}_{1}(f) \\
\mu^{\mathrm{MC}}_{2}(f)\\
\vdots\\
\mu^{\mathrm{MC}}_{m}(f)
\end{bmatrix}
\end{equation*}
and the averages $\mu^{\mathrm{MC}}_{j}$, $j=1,\dots,m$, are defined in~\eqref{relintegrals}. A suitable basis for histopolation is for instance given by the Chebyshev polynomials of the second kind \cite{Bruno:2023:PIO}.


 \begin{figure}
\centering
\includegraphics[scale=0.45]{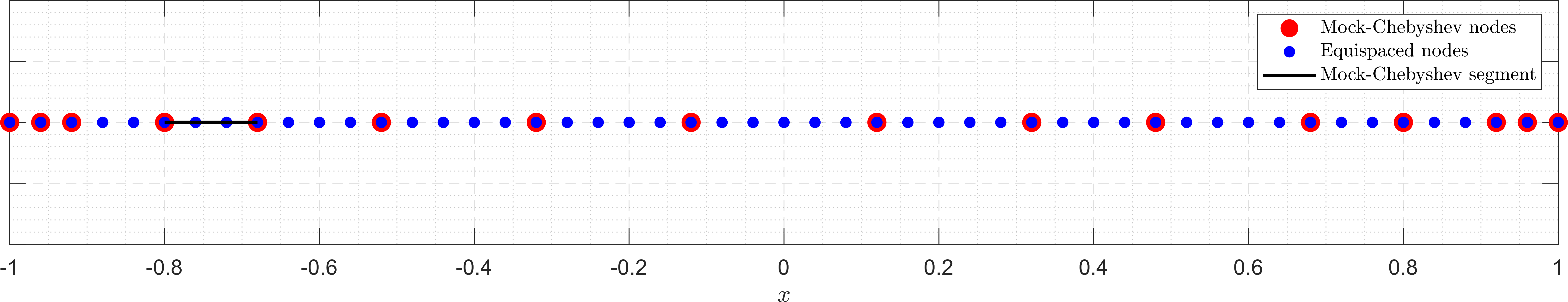}
\caption{Cartesian grid of $n+1=51$ equispaced nodes (\textcolor{blue}{$\bullet$}), mock-Chebyshev nodes ${X}_{m+1}^{\mathrm{MC}}$ of size $m+1=16$ (\textcolor{red}{$\bullet$}), the black line highlights the mock-Chebyshev segment $s_4^{\mathrm{MC}}=[x_3^{\mathrm{MC}},x_4^{\mathrm{MC}}]$. The mock-Chebyshev segments $\mathcal{S}_{m}^{\mathrm{MC}}$ partition the interval $I$ and have the nodes ${X}_{m+1}^{\mathrm{MC}}$ as end-points. }\label{prbseg}
\end{figure}

\subsection{Quasi-nodal mock-Chebyshev method} \label{sect:insidesegments}

The integral information of the function $f$ on the equispaced segements $\mathcal{S}_{n}^{\mathrm{eq}}$ can be reduced in different ways to approximate $f$ with a polynomial of degree $m-1$. In the first variant discussed above integral information was averaged in order to get approximate mean values of the function $f$ on Chebyshev segments. In the second variant introduced now, the uniform set $\mathcal{S}_{n}^{\mathrm{eq}}$ is reduced to a subset of size $m$ in such a way, that only those segments are taken that contain a root of the Chebyshev polynomial of the first kind of degree $m$, as visualized in Fig.~\ref{figgridreg}. If the number $n$ of segments gets large and the respective uniform length correspondingly small, this segmental technique mimics polynomial interpolation on the Chebyshev nodes, and we therefore refer to this technique as quasi-nodal mock-Chebyshev approximation.  

More precisely, we consider for this technique the Chebyshev nodes, i.e., the roots of the Chebyshev polynomials of the first kind of degree $m$ given as
\begin{equation*}
    X_m^{\mathrm{CF}} = \left\{ x^{\mathrm{CF}}_1, \ldots, x^{\mathrm{CF}}_m \right\}, \qquad  x_i^{\mathrm{CF}}=\cos\left(\frac{2 i - 1}{2 m }\pi\right), \quad i=1,\dots,m.
\end{equation*}
The idea of this new method consists of interpolating the function $f$ only on the set of segments
\begin{equation*}
    \mathcal{S}_{m}^{\mathrm{MCF}}=\left\{s_{\iota_1},\dots s_{\iota_{m}}\right\},
\end{equation*}
such that every segment
\begin{equation}   \label{siota}
   s_{\iota_i}=[x_{\iota_{i}-1},x_{\iota_{i}}], \qquad i=1,\dots,m,
\end{equation}
contains a Chebyshev node
\begin{equation*}
    x_{i}^{\mathrm{CF}}\in s_{\iota_i}, \qquad i=1,\dots,m.
\end{equation*}
If a node $x_{i}^{\mathrm{CF}} = x_{\iota_{i}}$ lies on the border of two equispaced segments, we associate the left hand segment to the node $x_{i}^{\mathrm{CF}}$ such that $s_{\iota_i}=[x_{\iota_{i}-1},x_{\iota_{i}}]$. The next lemma gives us a sufficient condition on the number $n$ to guarantee the existence and uniqueness of the segments $\mathcal{S}_{m}^{\mathrm{MCF}}$. 

\begin{lemma} \label{lem:refiningsegm}
Let $\varepsilon > 0$, $ m \in \mathbb{N}$ be given, and $ X_m^{\mathrm{CF}} = \left\{ x_1^{\mathrm{CF}}, \ldots, x_m^{\mathrm{CF}} \right\} $ the roots of the Chebyshev polynomial of the first kind of degree $m$. 
Choosing $n \in \mathbb{N}$ such that $$ n \geq \max \left \{  m^2 \frac{8}{\pi^2} , \, \frac{2}{\epsilon}\right \} $$ we get a unique set $\mathcal{S}_{m}^{\mathrm{MCF}} =\left\{s_{\iota_1},\dots s_{\iota_{m}}\right\}$ of distinct uniform segments in $\mathcal{S}_n^{\mathrm{eq}}$ such that $x_{i}^{\mathrm{CF}} \in s_{\iota_i}$ and $ \left\lvert s_{\iota_i} \right\rvert \leq \varepsilon $ for all $ i=1,\dots,m$.
\end{lemma}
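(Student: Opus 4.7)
The plan is to follow the same strategy as the proof of Lemma~\ref{lem:closesegments}, replacing Chebyshev--Lobatto nodes by first-kind Chebyshev roots and ``closest equispaced grid node'' by ``equispaced segment that contains it''. A crucial simplification is that every segment in $\mathcal{S}_n^{\mathrm{eq}}$ has the same length $2/n$, so the assumption $n \geq 2/\varepsilon$ instantly gives $|s_{\iota_i}| = 2/n \leq \varepsilon$ for every $i$ once the $\iota_i$'s have been produced, and the entire work is to produce an injective assignment $i \mapsto \iota_i$ with $x_i^{\mathrm{CF}} \in s_{\iota_i}$.

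For the assignment I would set $\iota_i$ to be the unique index $j \in \{1, \ldots, n\}$ with $x_{j-1} < x_i^{\mathrm{CF}} \leq x_j$ (and $\iota_i := 1$ when $x_i^{\mathrm{CF}} = -1$). Since the equispaced segments cover $I$ and overlap only at their shared endpoints, this is always well defined, implements the left-hand boundary convention fixed in the paragraph preceding the lemma, and satisfies $x_i^{\mathrm{CF}} \in s_{\iota_i}$ by construction. The real content is the injectivity of $i \mapsto \iota_i$, for which it suffices to show that the gap between any two consecutive Chebyshev nodes exceeds $2/n$. The sum-to-product identity gives
\begin{equation*}
x_i^{\mathrm{CF}} - x_{i+1}^{\mathrm{CF}} = \cos\!\left(\tfrac{(2i-1)\pi}{2m}\right) - \cos\!\left(\tfrac{(2i+1)\pi}{2m}\right) = 2\sin\!\left(\tfrac{i\pi}{m}\right) \sin\!\left(\tfrac{\pi}{2m}\right),
\end{equation*}
which by symmetry of $\sin(i\pi/m)$ around $i = m/2$ is minimised at $i = 1$ (and $i = m-1$). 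Jordan's inequality $\sin(x) \geq 2x/\pi$ on $[0,\pi/2]$ then yields the lower bound $\min_i (x_i^{\mathrm{CF}} - x_{i+1}^{\mathrm{CF}}) \geq 2 \cdot (2/m)(1/m) = 4/m^2$, while the assumption $n \geq 8 m^2 / \pi^2$ gives $2/n \leq \pi^2 / (4 m^2) < 4/m^2$ (using $\pi^2 < 16$). Thanks to the monotonicity of $i \mapsto x_i^{\mathrm{CF}}$, any violation of injectivity would force two consecutive Chebyshev nodes into the same segment, which the gap bound forbids; hence the $\iota_i$'s are pairwise distinct and the claim follows.

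I do not foresee a real obstacle: the argument is modelled on Lemma~\ref{lem:closesegments} and the trigonometric estimate is standard. The only minor subtlety is the bookkeeping of the boundary convention when a Chebyshev node coincides with an equispaced grid point, which is handled cleanly by the strict/non-strict inequalities in the definition of $\iota_i$.
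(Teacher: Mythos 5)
Your proof is correct, but it takes a genuinely different route from the paper's. The paper's own argument observes that the first-kind Chebyshev roots $X_m^{\mathrm{CF}}$ are a subset of the Chebyshev--Lobatto grid $X_{2m+1}^{\mathrm{CL}}$ and then invokes Lemma~\ref{lem:closesegments} with $2m$ in place of $m$ (which is exactly where the threshold $8m^2/\pi^2 = 2(2m)^2/\pi^2$ comes from): each $x_i^{\mathrm{CF}}$ has a unique nearest equispaced grid point $x_{\iota_i}$ within $\varepsilon/2$, the segment adjacent to that grid point on the appropriate side contains $x_i^{\mathrm{CF}}$, and the interlacing of $X_m^{\mathrm{CF}}$ with $X_{2m+1}^{\mathrm{CL}} \setminus X_m^{\mathrm{CF}}$ rules out two Chebyshev roots sharing a segment. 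You instead assign $\iota_i$ directly via the half-open covering $(x_{j-1},x_j]$ (consistent with the paper's left-hand boundary convention) and prove distinctness from the explicit spacing identity $x_i^{\mathrm{CF}} - x_{i+1}^{\mathrm{CF}} = 2\sin(i\pi/m)\sin(\pi/(2m)) \geq 4/m^2 > 2/n$; the estimate is correct (Jordan's inequality applies since $\pi/m \leq \pi/2$ for $m \geq 2$, and the case $m=1$ is trivial), and your remark that $|s_{\iota_i}| = 2/n \leq \varepsilon$ is automatic is cleaner than the paper's phrasing of that point. What your version buys is a self-contained quantitative argument that does not route through Boyd's extractability result via Lemma~\ref{lem:closesegments}; what the paper's version buys is a one-line explanation of the factor $8$ and a uniform treatment of both mock-Chebyshev constructions by the same auxiliary lemma.
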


\begin{proof}
Let $X^{\mathrm{CL}}_{2m+1}$ be the Chebyshev-Lobatto points of order $2m + 1$. Then, the Chebyshev nodes $X^{\mathrm{CF}}_{m}$ are a subset of $X^{\mathrm{CL}}_{2m+1}$. By Lemma \ref{lem:closesegments} (which is based on the observations in \cite{Boyd:2009:DRP}), the choice $n \geq \max\{8 m^2/\pi^2, \frac{2}{\varepsilon} \}$ guarantees the uniqueness of mock-Chebyshev points
$x_i^{\mathrm{MC}} = x_{\iota_i}$, $i = 0,\ldots,2m$ with $|x_i^{\mathrm{MC}} - x_i^{\mathrm{CL}}| \leq \varepsilon/2$. Therefore, also for $x_i^{\mathrm{CF}} \in X^{\mathrm{CF}}_{m}$ we have a unique $x_i^{\mathrm{MC}} = x_{\iota_i}$ with the same property $|x_i^{\mathrm{MC}} - x_i^{\mathrm{CF}}| \leq \varepsilon/2$. Without loss of generality, we can assume that $x_i^{\mathrm{MC}} > x_i^{\mathrm{CF}}$. Then, the node $x_i^{\mathrm{CF}}$ is contained in the segment $s_{\iota_i} = [x_{\iota_i-1}, x_{\iota_i}]$ and $\left\lvert s_{\iota_i} \right\rvert \leq \varepsilon $ since $x_{\iota_i}$ is the closest element of $X_{n+1}^{\mathrm{eq}}$ to $x_i^{\mathrm{CF}}$. Further, no other element $X^{\mathrm{CF}}_{m}$ is contained in $s_{\iota_i}$, since, by the interlacing of the nodes in $X^{\mathrm{CF}}_{m}$ and $X^{\mathrm{CL}}_{2m+1} \setminus X^{\mathrm{CF}}_{m}$ the node $x_{\iota_i-1}$ cannot be the closest element of $X^{\mathrm{eq}}_{n+1}$ to an element in $X^{\mathrm{CF}}_{m}$. \qed
\end{proof}  

{\bfseries Calculation of the quasi-nodal mock-Chebyshev approximant. } By fixing a basis 
\begin{equation*}
    \mathcal{B}_m=\left\{u_1,\dots,u_{m}\right\}
\end{equation*}
of the polynomial space $\mathbb{P}_{m-1}$, the polynomial histopolant obtained by this new procedure can be expanded as
\begin{equation*}
p^{\mathrm{MCF}}_{m-1}(x)=p^{\mathrm{MCF}}_{m-1}(f,x)=\sum_{i=1}^{m} a^{\mathrm{MCF}}_i u_i(x),   
\end{equation*}
where $\boldsymbol{a}^{\mathrm{MCF}}=\left[a^{\mathrm{MCF}}_1,\dots,a^{\mathrm{MCF}}_{m}\right]^T$ is the solution of the following linear system
  \begin{equation*}
\begin{bmatrix}
\mu_{\iota_1}(u_1) & \mu_{\iota_1}(u_2) & \cdots & \mu_{\iota_1}(u_{m})\\
\mu_{\iota_2}(u_1) & \mu_{\iota_2}(u_2) & \cdots & \mu_{\iota_2}(u_{m})\\
\vdots  & \vdots  & \ddots & \vdots  \\
\mu_{\iota_{m+1}}(u_1) & \mu_{\iota_{m}}(u_2) & \cdots & \mu_{\iota_{m}}(u_{m})\\
\end{bmatrix}
\begin{bmatrix}
a^{\mathrm{MCF}}_1 \\
a^{\mathrm{MCF}}_2\\
\vdots\\
a^{\mathrm{MCF}}_{m}
\end{bmatrix}=
\begin{bmatrix}
\mu_{\iota_1}(f) \\
\mu_{\iota_2}(f)\\
\vdots\\
\mu_{\iota_{m}}(f)
\end{bmatrix}
\end{equation*}
and $\mu_i$, $i=1,\dots,n$, are defined in~\eqref{hypoth}.

\begin{remark}\label{rem1}
We observe that, similarly as in the previously described method, also this approach uses only a reduced set of the totally available data. Specifically in this second approach, there are $n-m-1$ points of the data set that remain completely neglected. Therefore, it is reasonable to consider also more advanced approaches, as described in the next section, to enhance the accuracy of the approximation method by exploiting this additional information.
\end{remark}

 \begin{figure}
\centering
\includegraphics[scale=0.45]{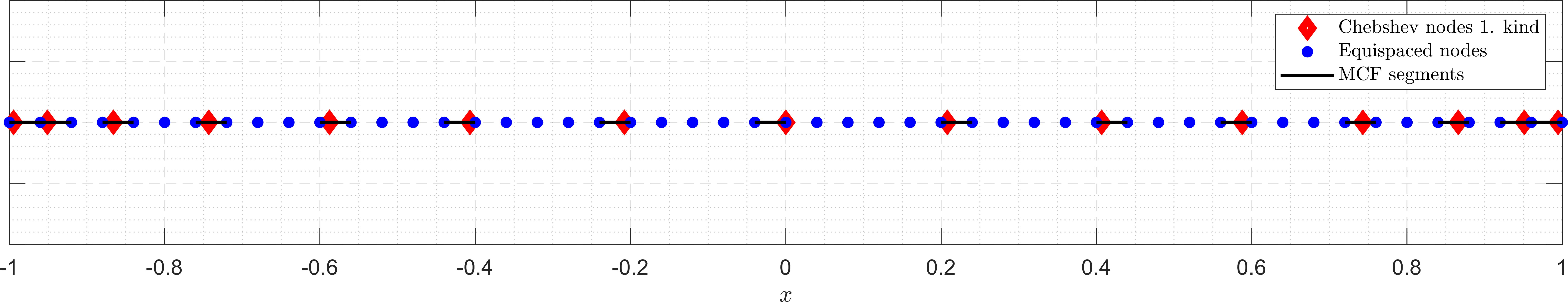}
\caption{Cartesian grid with $n+1=51$ equidistant nodes (\textcolor{blue}{$\bullet$}) and the Chebyshev nodes $X_{m}^{\mathrm{CF}}$ of first kind of order $m=16$ (\textcolor{red}{$\blacklozenge$}). The mock-Chebyshev segments $\mathcal{S}_{m}^{\mathrm{MCF}}$ are visualized with black lines and are characterized as elements of the uniform segments $\mathcal{S}_{n}^{\mathrm{eq}}$ that contain a Chebyshev node. }\label{figgridreg}
\end{figure}

With respect to the numerical conditioning of this second method, we get the following result for the Lebesgue constant $ \Lambda_m (\mathcal{S}_{m}^{\mathrm{MCF}})$ related to polynomial interpolation on the mock-Chebyshev segments $ \mathcal{S}_{m}^{\mathrm{MCF}} $ .

\begin{theorem} \label{thm:stabilityquasinodal}
 For $m \in \mathbb{N}$, let 
$$ n \geq \frac{2}{\alpha} m^2 \left( \frac{2}{\pi}\log m + 1\right), $$
with a fixed $0 < \alpha< 1$. Then, the Lebesgue constant $ \Lambda_{m} (\mathcal{S}_{m}^{\mathrm{MCF}}) $ associated with the mock-Chebyshev segments $ \mathcal{S}_{m}^{\mathrm{MCF}}$ grows at most logarithmically with the bound
\[ \Lambda_{m} (\mathcal{S}_{m}^{\mathrm{MCF}}) \leq \frac{1}{1-\alpha} \left( \frac{2}{\pi}\log m + 1 \right).\]
\end{theorem}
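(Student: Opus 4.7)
The plan mirrors the proof of Theorem \ref{thm:stabilityconcatenated}, but with the classical Chebyshev nodes of the first kind playing the role of the benchmark set in place of the Chebyshev segments. The starting point is the well-known logarithmic bound for the nodal Lebesgue constant on the roots of the Chebyshev polynomial of the first kind,
\[ \Lambda_m(X_m^{\mathrm{CF}}) \leq \frac{2}{\pi}\log m + 1, \]
which can be viewed as a norming constant for $\mathbb{P}_{m-1}$ in the sense of Section \ref{section-stability}. I would then interpret the segments $\mathcal{S}_m^{\mathrm{MCF}}$ as a perturbation of the point set $X_m^{\mathrm{CF}}$ and invoke the extension of Theorem \ref{thm-main} to the degenerate case of nodes being replaced by small segments, as spelled out in Remark \ref{rem-2}.

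Concretely, I set the perturbation parameter to
\[ \varepsilon = \frac{\alpha}{m^2\left(\frac{2}{\pi}\log m + 1\right)}, \]
so that the smallness hypothesis $\varepsilon \leq \alpha / \bigl(\Lambda_m(X_m^{\mathrm{CF}})(m-1)^2\bigr)$ required by Theorem \ref{thm-main} is satisfied. The chosen value of $n$ in the hypothesis of the theorem has been tailored so that $n \geq 2/\varepsilon$, and moreover $n \geq 8m^2/\pi^2$ is automatic (since $\frac{2}{\alpha}\left(\frac{2}{\pi}\log m +1\right) > 2 > 8/\pi^2$), ensuring that the conditions of Lemma \ref{lem:refiningsegm} are met. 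Applying that lemma produces unique segments $s_{\iota_i}\in\mathcal{S}_n^{\mathrm{eq}}$ with $x_i^{\mathrm{CF}} \in s_{\iota_i}$ and $|s_{\iota_i}| \leq \varepsilon$. Since each segment both contains the corresponding Chebyshev node and has length at most $\varepsilon$, both of its endpoints lie within $\varepsilon$ of $x_i^{\mathrm{CF}}$, which is exactly the form of perturbation required to apply the stability theorem.

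Applying Theorem \ref{thm-main} in the nodes-to-small-segments setting of Remark \ref{rem-2} now yields
\[ \Lambda_m(\mathcal{S}_m^{\mathrm{MCF}}) \leq \frac{1}{1-\alpha}\,\Lambda_m(X_m^{\mathrm{CF}}) \leq \frac{1}{1-\alpha}\left(\frac{2}{\pi}\log m + 1\right), \]
where the identification of the operator norm with the segmental Lebesgue constant is legitimate because the segments in $\mathcal{S}_m^{\mathrm{MCF}}$ are drawn from the non-overlapping equispaced set $\mathcal{S}_n^{\mathrm{eq}}$ and, by the interlacing argument in Lemma \ref{lem:refiningsegm}, are pairwise distinct.

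The only subtle point, and hence the part deserving most attention, is the justification that Theorem \ref{thm-main} genuinely applies in this mixed nodal–segmental setting: the original set consists of single points, yet the stability estimate was stated for segment sets. This is exactly what Remark \ref{rem-2} asserts, but one should record explicitly that in this limiting case the norming inequality reduces to the classical nodal one $\|p\| \leq \Lambda_m(X_m^{\mathrm{CF}})\,\max_i |p(x_i^{\mathrm{CF}})|$, and that the Markov-based perturbation argument of Proposition \ref{prop-main} goes through verbatim with point evaluations in place of averages on the unperturbed side. Once this is noted, everything else reduces to bookkeeping of constants.
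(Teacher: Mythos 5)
Your proposal is correct and follows essentially the same route as the paper: bound the nodal Lebesgue constant of $X_m^{\mathrm{CF}}$ by $\frac{2}{\pi}\log m + 1$, choose $\varepsilon = \alpha/\bigl(m^2(\frac{2}{\pi}\log m + 1)\bigr)$ so that $n \geq 2/\varepsilon$, invoke Lemma \ref{lem:refiningsegm} for existence and uniqueness of the segments, and apply the stability result of Theorem \ref{thm-main} in its nodes-to-small-segments form. Your explicit verification that $n \geq 8m^2/\pi^2$ holds automatically and your remark on the role of Remark \ref{rem-2} are slightly more careful than the paper's own exposition, but the argument is the same.
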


\begin{proof}
We consider the nodal Lebesgue constant $ \Lambda_{m} ({X}_m^{\mathrm{CF}}) $ associated with the Chebyshev nodes $ X_m^{\mathrm{CF}} = \left\{ x_1^{\mathrm{CF}}, \ldots, x_m^{\mathrm{CF}} \right\} $. It is well-known that this Lebesgue constant grows logarithmically in $ m $ and can be bounded as (see e.g. \cite[Sect. 3.2]{IbrahimogluSurvey})
\[\Lambda_{m} ({X}_m^{\mathrm{CF}}) \leq \frac{2}{\pi}\log m + 1.\]
We select now $\varepsilon > 0$ and $n \geq 2/\varepsilon$ such that
\[n \geq \frac{2}{\varepsilon} =  \frac{2}{\alpha} m^2 \left( \frac{2}{\pi} \log m + 1\right).\]
With this, Lemma \ref{lem:refiningsegm} guarantees the uniqueness of the mock-Chebyshev segments $\mathcal{S}_{m}^{\mathrm{MCF}} =\left\{s_{\iota_1},\dots s_{\iota_{m}}\right\}$ with the property that $x_{i}^{\mathrm{CF}} \in s_{\iota_i}$ and $ \left\lvert s_{\iota_i} \right\rvert \leq \varepsilon $ for all $ i=1,\dots,m$. Furthermore, the stability result of Theorem \ref{thm-main} gives the bound 
$$
\Lambda_m(\mathcal{S}_{m}^{\mathrm{MCF}}) \leq \frac{1}{1-\alpha} \Lambda_m(\mathcal{X}_m^{\mathrm{CF}}) \leq \frac{1}{1-\alpha} \left( \frac{2}{\pi}\log m + 1 \right).
$$
for the Lebesgue constant of the mock-Chebyshev segments. \qed
\end{proof}

\begin{remark}
In Theorem \ref{thm:stabilityquasinodal}, we observe that the larger the number $n$ of uniform segments is picked, meaning that also $\alpha$ can be chosen respectively small, the closer the bound of the Lebesgue constant of the quasi-nodal histopolation gets to the one of the Chebyshev nodes. This is certainly not surprising since in the limit $n \to \infty$ the quasi-nodal histopolant of a continuous function $f$ converges to the polynomial interpolant on the Chebyshev nodes ${X}_m^{\mathrm{CF}}$. 

In comparison to the condition $n \geq \frac{2}{\alpha} m^2 \left( \frac{2}{\pi}\log m + 1\right)$ of Theorem \ref{thm:stabilityquasinodal} which rigorously guarantees the stability of the Lebesgue constant for the mock-Chebyshev segments, many works related to mock-Chebyshev approximation typically use a quadratic relation $n \sim m^2$ between the two main parameters $m$ and $n$.  
In fact, the numerical tests displayed in Fig. \ref{fig:LebConstSMCF} indicate that the asymptotic logarithmic behavior of the Lebesgue constant $ \Lambda_m (\mathcal{S}_{m}^{\mathrm{MCF}}) $ might hold true already when $ n = m^2$. 
\end{remark}

\begin{figure}[htbp]
    \centering
    \includegraphics[width=.8\textwidth]{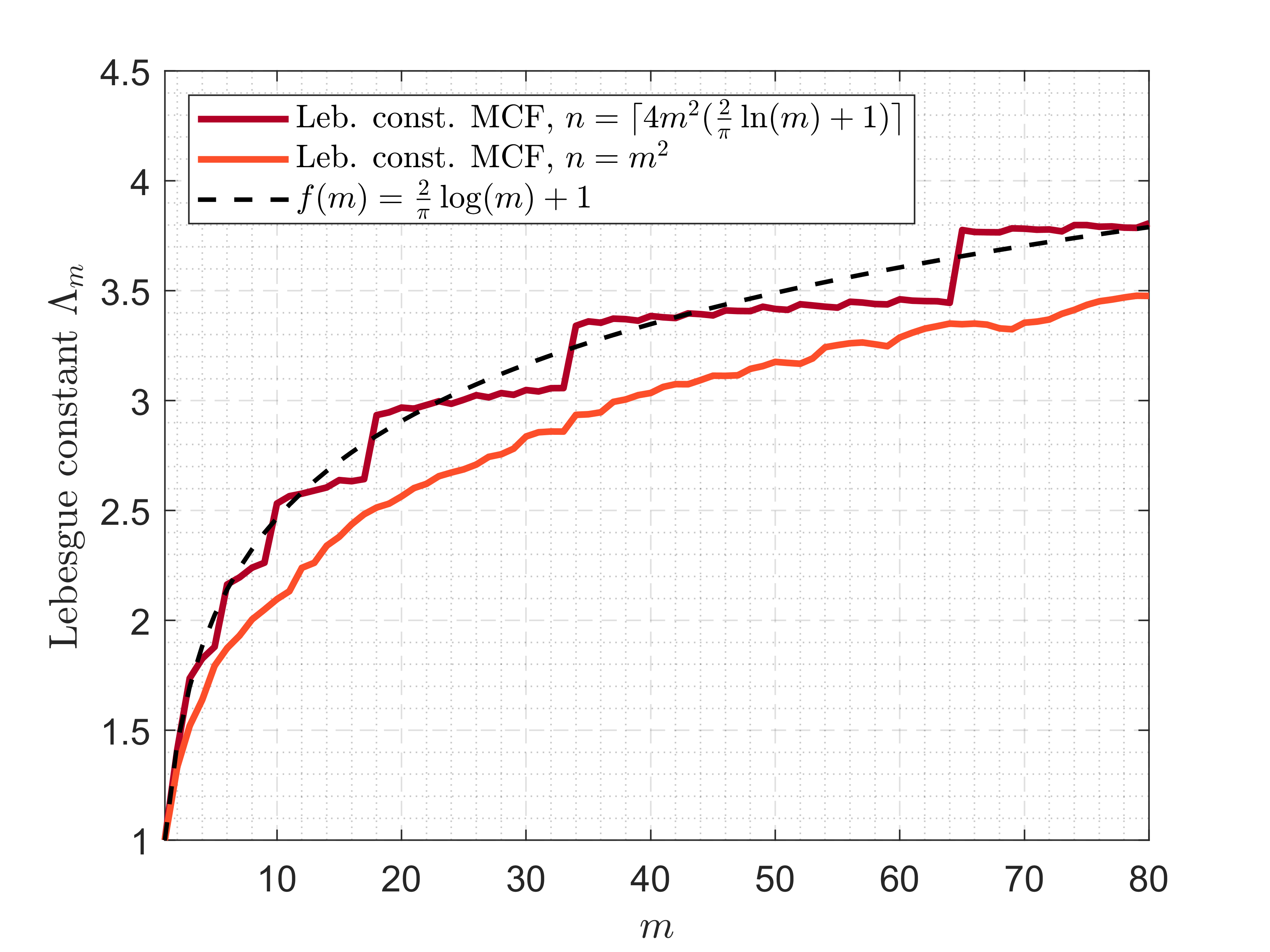}
    \caption{Segmental Lebesgue constant $\Lambda_m (\mathcal{S}_{m}^{\mathrm{MCF}})$ for the quasi-nodal segments $\mathcal{S}_{m}^{\mathrm{MCF}}$ depending on the selection of the equidistant grid size $n$.}
    \label{fig:LebConstSMCF}
\end{figure}

\subsection{Constrained segmental mock-Chebyshev method}
This last method aims to improve the approximation accuracy obtained by the quasi-nodal mock-Chebyshev segments introduced in the last section. As noted in Remark~\ref{rem1}, the previous method uses only $m$ pieces of data out of the $n$ available equispaced segments, neglecting the remaining $n-m$ integral values. We now seek to leverage these unused data to enhance the accuracy when approximating a smooth function. Drawing inspiration from the definition of the constrained mock-Chebyshev least-squares approximation in classical nodal interpolation, the idea is to exploit the $n-m$ unused data for a simultaneous regression. We combine the action of histopolation and least-squares approximation by enforcing area-matching conditions on a fixed set of quasi-nodal segments and a least-squares deviation for the remaining segments. This improves the overall quality of the approximant, as numerical experiment will show. An important aspect is the choice of the total degree $ r \geq m $. In analogy to the definition of the constrained mock-Chebyshev least-squares approximation in the classical theory, we set
 \begin{equation*}
     r=m+\left\lfloor \pi\sqrt{\frac{n}{12}} \right\rfloor+1,
 \end{equation*}
where $m$ is defined in~\eqref{valueofm}. This choice of the degree $r$ leads to a good approximation in the uniform norm~\cite{DeMarchi:2015:OTC}. 
We consider a basis  
\begin{equation*}
\mathcal{B}_{r}=\{u_1,\dots,u_r\}   
\end{equation*}
 of the polynomial space $\mathbb{P}_{r-1}$ such that
\begin{equation*}
    \operatorname{span}\left\{u_1,\dots,u_r\right\}=\mathbb{P}_{r-1}. 
\end{equation*}
In the interest of simplicity and concise notation, we consider the $m$ segments
\begin{equation*}
    s_{i}=s_{\iota_i}, \qquad i=1,\dots,m,
\end{equation*}
where the equispaced quasi-nodal segments $s_{\iota_i}$ have been defined in~\eqref{siota}. 
Consistently with this notion, we consider then the integral data
\begin{equation*}
    \mu_i(f)=\mu_{\iota_i}(f), \qquad i=1,\dots,m.
\end{equation*}
and the interpolation matrix
\begin{equation}\label{interpmatrix}
N_{n,r}=\begin{bmatrix}
\mu_1(u_1) & \mu_1(u_2) & \cdots & \mu_1(u_r)\\
\mu_2(u_1) & \mu_2(u_2) & \cdots & \mu_2(u_r)\\
\vdots  & \vdots  & \ddots & \vdots  \\
\mu_{n}(u_1) & \mu_{n}(u_2) & \cdots & \mu_{n}(u_r)\\
\end{bmatrix},
\end{equation}
as well as the submatrix $N_{m,r}$, formed by taking the first $m$ rows of $N_{n,r}$. We further use the abbreviations 
\begin{equation*}
\boldsymbol{b}=\left[  \mu_1(f),\dots,\mu_{n}(f)\right]^T, \qquad     \boldsymbol{d}=\left[\mu_1(f),\dots,\mu_{m}(f)\right]^T.
\end{equation*}
Then, the polynomial approximant obtained by this new method can be written as
\begin{equation}\label{ConMC}
    \hat{p}^{\mathrm{MCF}}_r(x)=\hat{p}^{\mathrm{MCF}}_r(f,x)=\sum_{i=1}^r \hat{a}_i^{\mathrm{MCF}}u_i(x),
\end{equation}
where $\boldsymbol{\hat{a}}^{\mathrm{MCF}}=\left[\hat{a}_1^{\mathrm{MCF}},\dots, \hat{a}_r^{\mathrm{MCF}}\right]^T$
is the solution of the linear system
\begin{equation}
    \begin{bmatrix}
G_{r,r} &  N_{m,r}^T  \\
 N_{m,r} & 0  \\
\end{bmatrix}
\begin{bmatrix}
\boldsymbol{\hat{a}}^{\mathrm{MCF}} \\
{\boldsymbol{z}}^{\mathrm{MCF}} \\
\end{bmatrix}=
\begin{bmatrix}
 \boldsymbol{c}\\
\boldsymbol{d} \\
\end{bmatrix}, \qquad
\end{equation}
 ${\boldsymbol{z}}^{\mathrm{MCF}}$ is a vector of Lagrange multipliers and 
\begin{equation}
\label{defW}
G_{r,r}=2 N_{n,r}^T N_{n,r}, \qquad \boldsymbol{c}=2  N_{n,r}^T\boldsymbol{b}.
\end{equation}
The matrix 
\begin{equation}\label{KKTmatrix}
    M=    \begin{bmatrix}
G_{r,r} &  N_{m,r}^T  \\
 N_{m,r} & 0  \\
\end{bmatrix}
\end{equation}
is called KKT matrix. In order to prove that the approximating polynomial~\eqref{ConMC} is well-defined, the KKT matrix $M$ has to be invertible. This is indeed true due to the fact that the $n$ functionals $\mu_i$ are linearly independent in the space $\mathbb{P}_{n-1}$.  
\begin{theorem}
    The matrix $M$ defined in~\eqref{KKTmatrix} is nonsingular. 
\end{theorem}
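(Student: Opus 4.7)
The matrix $M$ has the classical saddle-point (KKT) structure, so the plan is to verify the two standard sufficient conditions for invertibility of such block matrices: namely, that the $(1,1)$ block $G_{r,r}$ is symmetric positive definite and that the constraint block $N_{m,r}$ has full row rank $m$. Once both are established, invertibility follows by a short kernel argument.

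\textbf{Positive definiteness of $G_{r,r}$.} Since $G_{r,r} = 2 N_{n,r}^T N_{n,r}$, it is automatically symmetric and positive semidefinite, and it is positive definite precisely when $N_{n,r}$ has full column rank $r$. Suppose $N_{n,r} \boldsymbol{v} = \boldsymbol{0}$ for some $\boldsymbol{v} = [v_1, \ldots, v_r]^T$. Setting $p = \sum_{j=1}^r v_j u_j \in \mathbb{P}_{r-1} \subset \mathbb{P}_{n-1}$, this is equivalent to $\mu_i(p) = 0$ for all $i = 1, \ldots, n$. Because the $n$ functionals $\mu_1, \ldots, \mu_n$ are linearly independent on $\mathbb{P}_{n-1}$ (as stated in the excerpt just above the theorem), they form a basis of its dual space, hence are unisolvent. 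Therefore $p = 0$, which forces $\boldsymbol{v} = \boldsymbol{0}$ since $\{u_1,\ldots,u_r\}$ is a basis of $\mathbb{P}_{r-1}$.

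\textbf{Full row rank of $N_{m,r}$.} The rows of $N_{m,r}$ correspond to the functionals $\mu_1, \ldots, \mu_m$ associated with the quasi-nodal mock-Chebyshev segments introduced in Section \ref{sect:insidesegments}. These $m$ functionals are unisolvent on $\mathbb{P}_{m-1}$ (via Lemma \ref{lem:refiningsegm} and Theorem \ref{thm:stabilityquasinodal}), and in particular linearly independent as elements of the dual of $\mathbb{P}_{m-1}$. Since $\mathbb{P}_{m-1} \subset \mathbb{P}_{r-1}$, linear independence on the smaller space automatically lifts to linear independence on $\mathbb{P}_{r-1}$, giving $N_{m,r}$ the full row rank $m$.

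\textbf{Invertibility of $M$.} Suppose $M [\boldsymbol{x}^T, \boldsymbol{y}^T]^T = \boldsymbol{0}$, i.e., $G_{r,r} \boldsymbol{x} + N_{m,r}^T \boldsymbol{y} = \boldsymbol{0}$ and $N_{m,r} \boldsymbol{x} = \boldsymbol{0}$. Left-multiplying the first equation by $\boldsymbol{x}^T$ gives $\boldsymbol{x}^T G_{r,r} \boldsymbol{x} + (N_{m,r} \boldsymbol{x})^T \boldsymbol{y} = \boldsymbol{x}^T G_{r,r} \boldsymbol{x} = 0$. By positive definiteness of $G_{r,r}$ we conclude $\boldsymbol{x} = \boldsymbol{0}$, and then $N_{m,r}^T \boldsymbol{y} = \boldsymbol{0}$ together with the full row rank of $N_{m,r}$ gives $\boldsymbol{y} = \boldsymbol{0}$.

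The proof is essentially routine linear algebra once the two structural properties are isolated; the only subtlety is recognizing that the paper's hypothesis (linear independence of the $n$ functionals on $\mathbb{P}_{n-1}$) is strong enough to serve double duty, guaranteeing both the injectivity of $N_{n,r}$ on $\mathbb{P}_{r-1}$ and, via the earlier unisolvence arguments for the quasi-nodal segments, the full row rank of the constraint block $N_{m,r}$.
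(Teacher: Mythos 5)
Your proof is correct and follows essentially the same route as the paper: both arguments reduce the problem to showing that $N_{n,r}$ has full column rank and $N_{m,r}$ has full row rank, using unisolvence of the equispaced segments on $\mathbb{P}_{n-1}$ and of the quasi-nodal segments on $\mathbb{P}_{m-1}$, respectively. The only difference is that you spell out the standard saddle-point kernel argument explicitly, whereas the paper delegates that final step to a citation.
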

\begin{proof}
To demonstrate the validity of this theorem, we have to prove that both, the matrix  $N_{n,r}$ defined in~\eqref{interpmatrix}, as well as its submatrix $N_{m,r}$ have full rank~\cite{DellAccio:2022:GOT}. By extending the basis $\mathcal{B}_r=\{u_1,\dots, u_r\}$ to a basis 
\begin{equation*}
    \mathcal{B}_{n}=\{u_1,\dots,u_r, u_{r+1}, \dots, u_{n}\}
\end{equation*} 
of the vector space $\mathbb{P}_{n-1}$, we can construct the full interpolation matrix 
  \begin{equation}\label{prova}
   N_{n,n}=\begin{bmatrix}
\mu_{1}(u_1) & \mu_{1}(u_2) & \cdots & \mu_{1}(u_{n})\\
\mu_{2}(u_1) & \mu_{2}(u_2) & \cdots & \mu_{2}(u_{n})\\
\vdots  & \vdots  & \ddots & \vdots  \\
\mu_{n}(u_1) & \mu_{n}(u_2) & \cdots & \mu_{n}(u_{n})\\
\end{bmatrix}.
\end{equation}
Since the equispaced segments form a connected chain of intervals, this matrix is nonsingular~\cite[Prop. 3.1]{Bruno:2023:PIO}, meaning its columns are linearly independent.
Since $N_{n,r}$ is a submatrix of~\eqref{prova} consisting of its first $r$ columns, also the columns of $N_{n,r}$  are linearly independent. Therefore, since $r<n$, the matrix $N_{n,r}$ has full rank.

It remains to prove that the matrix $N_{m,r}$ is of full rank.
To this aim, we consider the matrix
     \begin{equation}\label{pr1}
   N_{m,m}=\begin{bmatrix}
\mu_1(u_1) & \mu_1(u_2) & \cdots & \mu_1(u_m)\\
\mu_2(u_1) & \mu_2(u_2) & \cdots & \mu_2(u_m)\\
\vdots  & \vdots  & \ddots & \vdots  \\
\mu_{m}(u_1) & \mu_{m}(u_2) & \cdots & \mu_{m}(u_m)\\
\end{bmatrix}.
\end{equation}
Since the matrix~\eqref{pr1} is known to be nonsingular~\cite[Prop. 3.1]{Bruno:2023:PIO}, its rows are linearly independent. As $N_{m,m}$ is a submatrix of $N_{m,r}$, also the rows of $N_{m,r}$ are linearly independent. Thus, since $r > m$, also the matrix $N_{m,r}$ has maximum rank. \qed
\end{proof}


\section{Numerical experiments}
\label{SecNum}
In this section, we numerically test the accuracy of the proposed methods by several examples. We consider the test functions
\begin{equation} \label{testfun}
    f_1(x)=\frac{1}{1+25x^2}, \quad f_2(x)=\frac{1}{1+8x^2}, \quad f_3(x)=e^{x^2+1}, 
\end{equation}
\begin{equation} \label{testfun1}
   f_4(x)=\cos(5x), \quad f_5(x)=\frac{1}{x-1.5}, \quad f_6(x)=x\left\lvert x\right\rvert^3.
\end{equation}
In all experiments, we use the Chebyshev polynomial of the first kind given by
\begin{equation*}
    T_0(x)=1, \quad T_1(x)=x, \quad T_{k+1}(x)=2x T_k(x)-T_{k-1}(x), \quad k\ge 2,
\end{equation*}
as basis polynomials for the respective spaces. 
We conduct two types of numerical experiments. In the first type, we compare the maximum approximation errors produced by approximating the functions $f_1, \ldots, f_6$  using polynomial interpolation with those produced by applying our methods on a uniform grid with $n=50$ segments
\begin{equation*}
    e_n^{\mathrm{eq}}=\max_{x\in [0,1]} \left\lvert f(x)-p^{\mathrm{eq}}_{n-1}(x) \right\rvert, \qquad e_{m}^{\mathrm{MC}}=\max_{x\in [0,1]} \left\lvert f(x)-p^{\mathrm{MC}}_{m-1}(x) \right\rvert,
\end{equation*}
\begin{equation*}
 {e}_{m}^{\mathrm{MCF}}=\max_{x\in [0,1]} \left\lvert f(x)-{p}^{\mathrm{MCF}}_{m-1}(x) \right\rvert,  \qquad \hat{e}_{r}^{\mathrm{MCF}}=\max_{x\in [0,1]} \left\lvert f(x)-\hat{p}^{\mathrm{MCF}}_{r-1}(x) \right\rvert.
\end{equation*}
 The results of this first test are summarized in Table~\ref{tab:firsttesti}.  From these results, we observe a notable improvement in approximation accuracy produced by our methods. Specifically, we note that the error produced by the interpolation on equally spaced segments based on $X_{n+1}^{\mathrm{eq}}$, even with $n=50$, is not comparable to that produced by our approximation methods. This discrepancy can be attributed to the condition number of the relative Vandermonde matrix and KKT matrix in the case of the constrained segmental mock-Chebyshev method. In fact, the condition number of the Vandermonde matrix associated to interpolation on equispaced segments grows dramatically even for small values of $n$. In contrast, the condition numbers of the Vandermonde matrices associated to our methods and that of the KKT matrix in the constrained segmental mock-Chebyshev method remain low even for large $n$, as shown in Fig.~\ref{CondNumbtrend}.

 \begin{table}
    \centering
    \begin{tabular}{ c | c | c |c|c } 
 & $e^{\mathrm{eq}}_{n}$ & $e^{\mathrm{MC}}_{m}$ & $e^{\mathrm{MCF}}_m$ & $\hat{e}_r^{\mathrm{MCF}}$\\ \hline \hline
        $f_1(x)$ &  4.77e+06  &  6.19e-02 & 7.39e-02& 2.67e-01\\ \hline
        $f_2(x)$ & 6.57e+02 &  1.12e-02 & 9.19e-03  & 1.25e-02\\ \hline
        $f_3(x)$ & 4.05e-03 &  2.10e-08& 8.48e-10 & 5.90e-13\\ \hline
        $f_4(x)$ & 2.67e-03 & 9.12e-07 & 2.85e-08 & 7.43e-13\\ \hline
        $f_5(x)$ & 1.68e-03 &  6.43e-06& 1.61e-06  & 2.94e-08\\ \hline
        $f_6(x)$ & 1.53e+06 & 1.31e-04 & 1.22e-04 & 2.33e-04\\ \hline
    \end{tabular}
    \caption{Comparison between the approximation accuracy produced by the interpolation of equispaced segments ($e^{\mathrm{eq}}_n$) with that obtained through the concatenated mock-Chebyshev method ($e^{\mathrm{MC}}_{m}$), the quasi-nodal mock-Chebyshev method  ($e_m^{\mathrm{MCF}}$) and the constrained mock-Chebyshev segments method ($\hat{e}_r^{\mathrm{MCF}}$) based on a grid of $n+1=51$ uniform nodes.}
    \label{tab:firsttesti}
\end{table}

In the second set of experiments, we analyze the behavior of the maximum approximation errors generated by approximating the test functions $f_1$-$f_6$ using our methods. This analysis is conducted using equally spaced grids $X_{n+1}^{\mathrm{eq}}$, with $n$ ranging from $50$ to $1000$.
The results of this experiment are shown in Fig.~\ref{fig1} and Fig.~\ref{fig2}.

\begin{figure}
\centering
\includegraphics[scale=0.5]{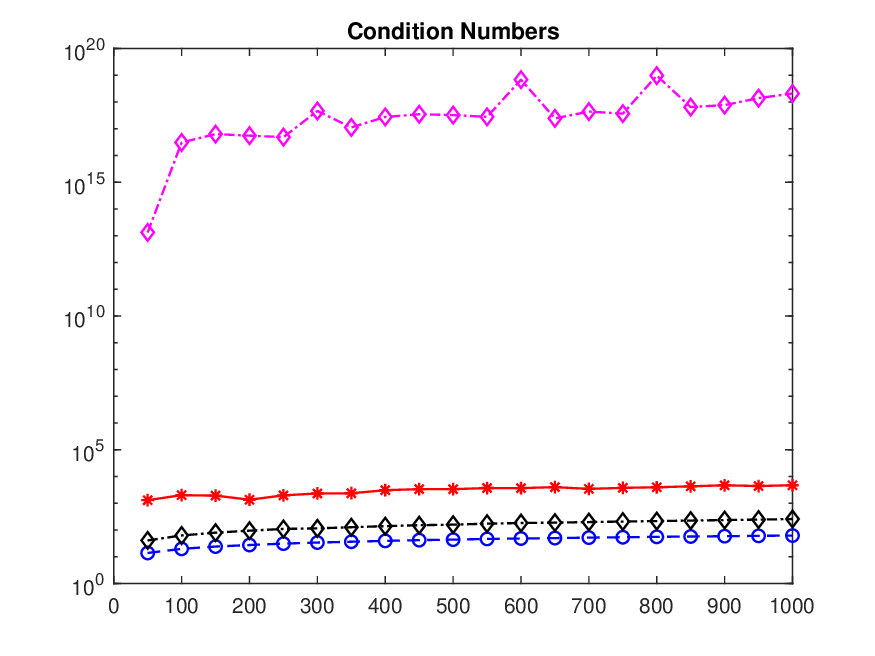}
\caption{Trend of the condition number of the Vandermonde matrix associated by the interpolation on equispaced segments (magenta line) with that obtained through the concatenated mock-Chebyshev method (blue line), the quasi-nodal mock-Chebyshev method  (black line) and the trend of the KKT matrix relative to the constrained mock-Chebyshev method (red line) based on a grid of $n+1=51:50:1001$ uniform nodes. }\label{CondNumbtrend}
\end{figure}

\begin{figure}
\centering
\includegraphics[width=.32\textwidth]{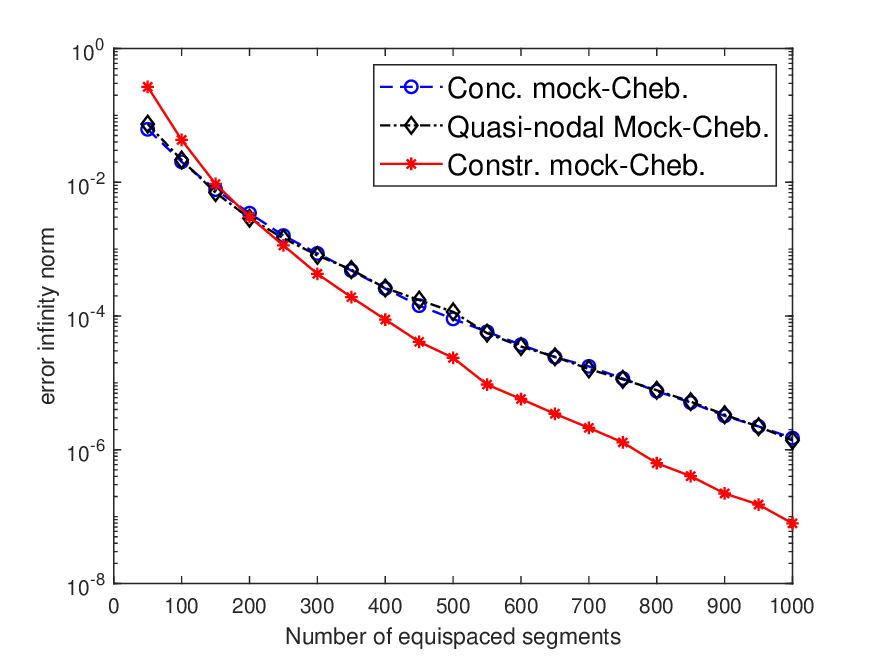}\hfil
\includegraphics[width=.32\textwidth]{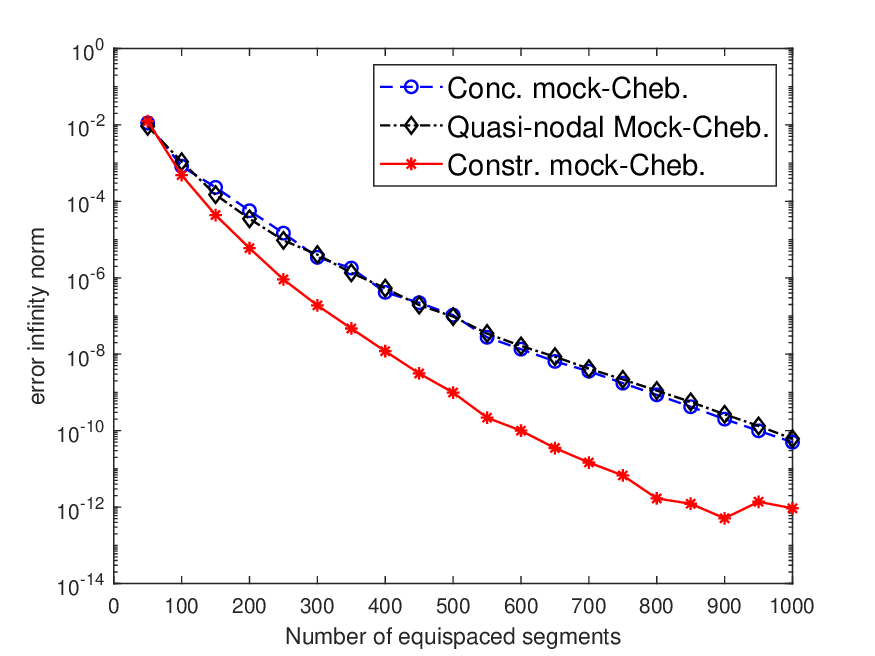}\hfil
\includegraphics[width=.32\textwidth]{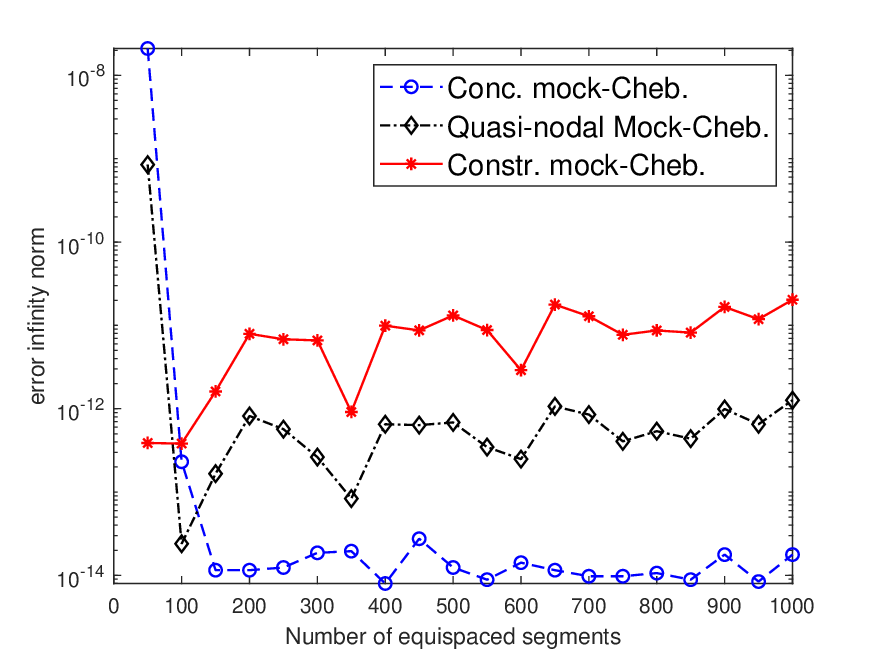}
    \caption{Trend of the maximum approximation error produced by approximating the functions $f_1$ (left), for $f_2$ (center) and $f_3$ (right) by using the concatenated mock-Chebyshev method (blue line), the quasi-nodal mock-Chebyshev method  (black line) and the constrained mock-Chebyshev method (red line) based on a grid of $n+1=51:50:1001$ equispaced nodes.}
 \label{fig1} 
\end{figure}

\begin{figure}
\centering
\includegraphics[width=.32\textwidth]{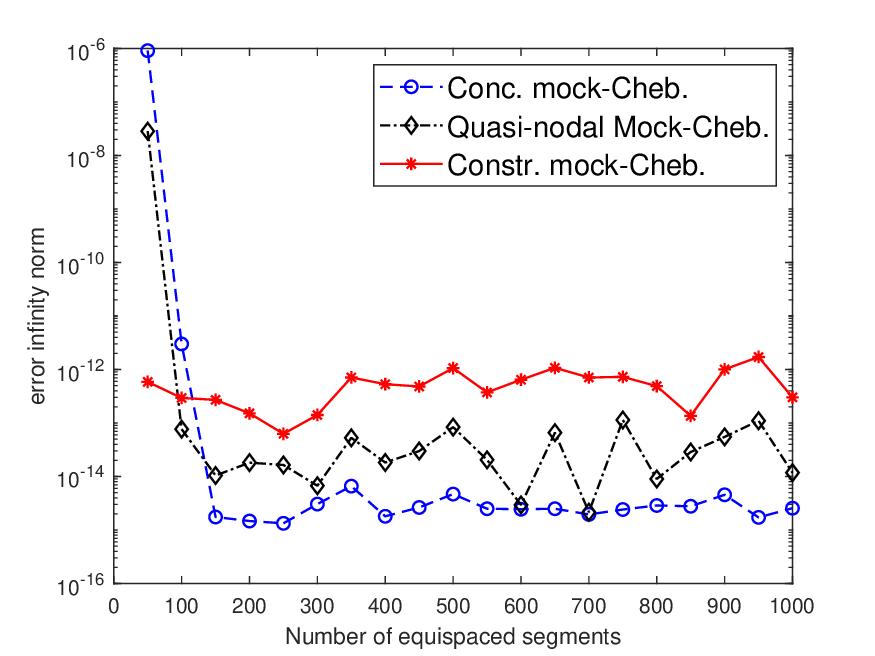}\hfil
\includegraphics[width=.32\textwidth]{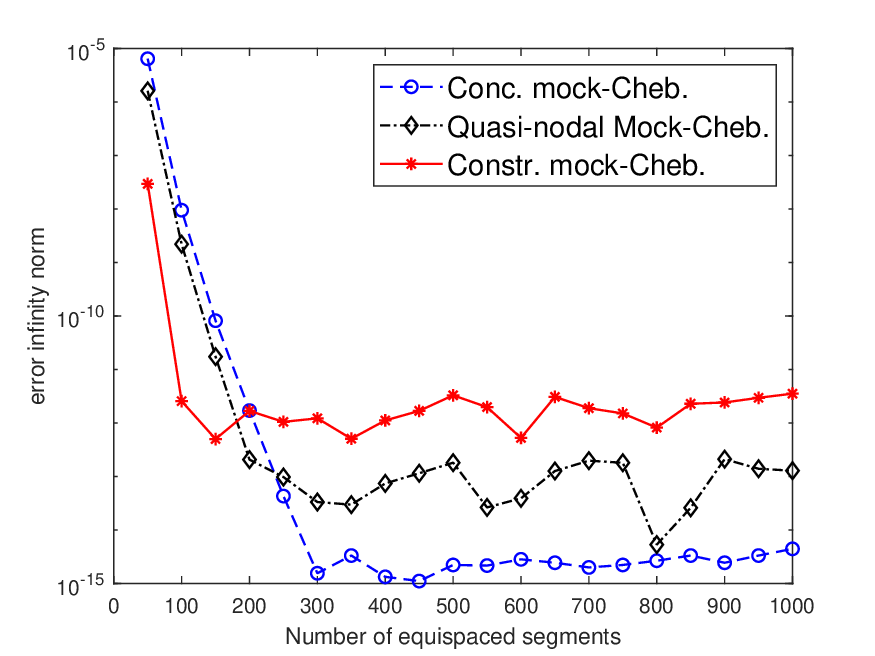}\hfil
\includegraphics[width=.32\textwidth]{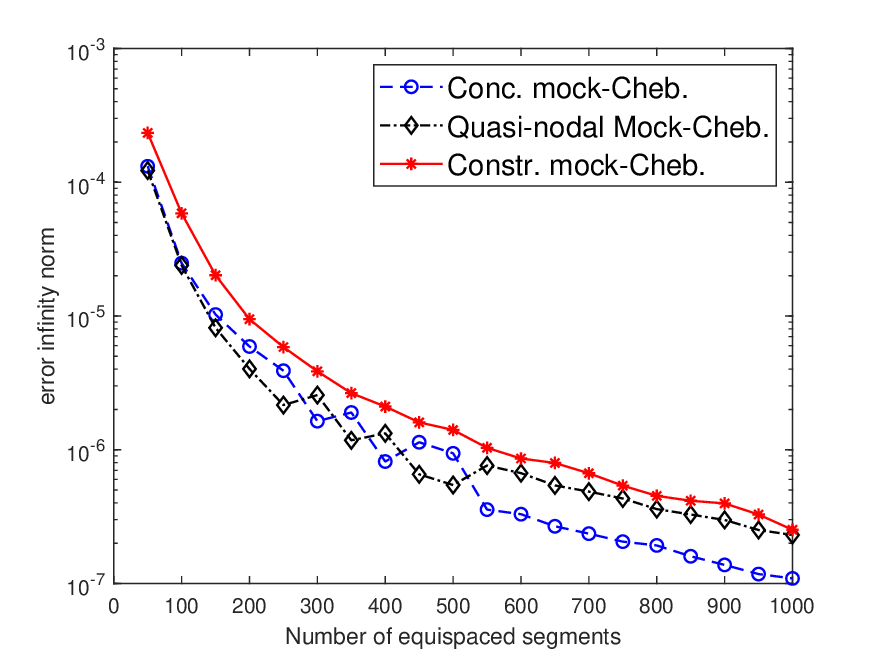}
    \caption{Trend of the maximum approximation error produced by approximating the functions $f_4$ (left), for $f_5$ (center) and $f_6$ (right) by using the concatenated mock-Chebyshev method (blue line), the quasi-nodal mock-Chebyshev method  (black line) and the constrained mock-Chebyshev method (red line) based on a grid of $n+1=51:50:1001$ equispaced nodes.}
 \label{fig2} 
\end{figure}
As can be seen in Fig. \ref{fig2}, the error trend generally decreases as $n$ increases. Once a maximum precision has been reached, increasing the number of nodes does not lead to a more accurate approximation, but rather remains constant. Furthermore, we observe that the regression technique in the constrained segmental mock-Chebyshev method does not always improve the accuracy of the approximation compared to the quasi-nodal mock-Chebyshev method.

\section*{Acknowledgments}
 This research has been conducted as part of RITA \textquotedblleft Research
 ITalian network on Approximation'' and as part of the UMI group \enquote{Teoria dell'Approssimazione
 e Applicazioni}. The research was supported by GNCS-INdAM 2024 projects. The first author is funded by IN$\delta$AM and supported by the University of Padova. The third and the fourth author are funded by the European Union – NextGenerationEU under the National Recovery and Resilience Plan (NRRP), Mission 4 Component 2 Investment 1.1 - Call PRIN 2022 No. 104 of February 2, 2022 of Italian Ministry of University and Research; Project 2022FHCNY3 (subject area: PE - Physical Sciences and Engineering) \enquote{Computational mEthods for Medical Imaging (CEMI)}.

 \end{document}